\newtheorem{thm}{Theorem}[section]
\newtheorem{lem}[thm]{Lemma}
\newtheorem{prop}[thm]{Proposition}
\newtheorem{rem}[thm]{Remark}
\numberwithin{equation}{section}
\newcommand{\R}{\mathbb{R}}
\newcommand{\ve}{\varepsilon}
\newcommand{\rd}{\mathrm{d}}
\newcommand{\dhr}{\mathrel{\lhook\joinrel\relbar\kern-.8ex\joinrel\lhook\joinrel\rightarrow}}
\begin{document}

\title[A one-dimensional Biofilm model]{Analysis of a one-dimensional biofilm model}


%
\author{Patrick Guidotti}
\address{University of California, Irvine\\
Department of Mathematics\\
340 Rowland Hall\\
Irvine, CA 92697-3875\\
USA}
\email{pguidott@uci.edu}

\author{Christoph Walker}
\address{Leibniz Universit\"at Hannover\\
Institut f\"ur Angewandte Mathematik\\
Welfengarten 1\\
30167 Hannover\\
Germany}
\email{walker@ifam.uni-hannover.de}
\date{\today}

\begin{abstract}
In this paper a reduced one-dimensional moving boundary model is
studied that describes the evolution of a biofilm driven by the
presence of a reaction limiting substrate. Global well-posedness is
established for the resulting parabolic free boundary value problem in
strong form in Sobolev spaces and for a quasi-stationary approximation
in spaces of classical regularity. The general existence results are
complemented by results about the qualitative properties of solutions
including the existence, in general, and, additionally, the uniqueness
and stability of non-trivial equilibria, in a special
case. 

\end{abstract}
%
\keywords{Biofilm, free boundary problem, well-posedness, equilibria}
\subjclass[2020]{35Q92,35R35}
\maketitle
\section{Introduction}
Microbial systems are the focus of an increasing body of research due
to their prevalence in many important fields of application.  Microbes
tend to live in localized communities called biofilms. Most of the
research dedicated to biofilms is confined to the biological realm or
to the specific area of application. The systematic development of
mathematical models and their analysis is still quite
limited. Inspired by the review article \cite{KD10}, we consider a
one-dimensional toy model for the evolution of a single biofilm in a
single reaction limiting susbtrate (e.g. oxygen) introduced in
that article. Even in its simplicity the model takes the form of a
non-standard moving boundary problem with interesting features. The
goal of this paper is to provide a rigorous analysis of the system
that include well-posedness as well as qualitative properties such as
global existence and long-time behavior. The model tracks the
substrate, via its concentration $c(t,z)$ everywhere in the upper
half-space (or half-line since we shall assume homogeneity in the
other variables). Its concentration can be assumed constant in the
bulk fluid region far enough from the biofilm, where a well mixed
regime is typically observed. The biofilm is simply modeled by the
height (depth) $h(t)\geq 0$ to which it has grown. Within the biofilm and in a
diffusive boundary layer on top of it, the substrate can freely
diffuse with (slightly) different diffusivities. It is assumed, based
on observations, that the diffusive boundary layer have a given thickness
$L>0$. At the bottom a wall confines the system and a no-flux
condition is enforced. At the interface $z=h(t)$ continuity of the
substrate concentration and flux is imposed along with a dynamic
boundary condition for the evolution of the biofilm thickness. The
latter captures the modeling assumption that the speed of the
interface coincides with the one generated by the pressure change due
to the biofilm growth or decay. More specifically, let $L,c_*>0$,
$\kappa,\kappa_L>0$, $\varepsilon_L,\varepsilon>0$, and consider the
system
\begin{subequations}\label{Bx}
\begin{align}
\varepsilon_L  c_t&=\kappa_Lc_{zz}, &&h(t)<z<h(t)+L,\quad t>0,\label{BB1x}\\
\varepsilon c_t&=\kappa c_{zz}-r(c),  &&0<z<h(t),\\
c(t,h(t)+L)&=c_*,\quad c_z(t,0)=0, &&t>0,\label{BB3x}\\
\llbracket c\rrbracket&=\llbracket \varkappa c_z\rrbracket=0, &&z=h(t),\label{BB4x}\\
c(0,z)&=c_0(z),\: &&0<z<h_0+L, \label{BB5x}\\
h_t&=\int_0^h g\big(r(c(t,z))\big)\,\rd z,&& t>0, \label{BB6x}\\
h(0)&=h_0>0 \label{BB7x},
\end{align}
\end{subequations}
for the unknown biofilm height $h=h(t)$ and substrate $c=c(t,z)$. Here
we have set
$$\varkappa(z)=\begin{cases}
  \kappa_L,&z\in \bigl( h(t),h(t)+L\bigr)\\
  \kappa,&z\in \bigl(0,h(t)\bigr),
\end{cases}
$$
for the substrate diffusivities in the diffusive layer
$[h(t)<z<h(t)+L]$ and in the biofilm $[0<z<h(t)]$, respectively. The
function $r$ models the substrate consumption that drives the biofilm
growth, itself captured by a function $g$ of $r$. A common choice for
the function $g$ is given by $g(r)=\alpha(r-b)$, where $\alpha>0$
represents a yield coefficient and $b$ the basic substinence level of
the biofilm. Minimal assumptions on the function $r$ are that
$r(0)=0$, modeling the fact that no substrate is consumed if none is
present, and that $r(c)>0$ for $c>0$ indicating that available
substrate is always used. The function $r$ is also typically made to reflect a
saturation at increasing levels of the substrate concentration. We
shall not make use of this additional structure in this analysis but
we will assume that $r'(c)>0$ for $c>0$, which does not contradict the
saturation assumption and simply encodes the fact the consumption
increases with substrate concentration. The assumptions about the
substrate beyond the top diffusive layer and the bottom wall are
reflected in the boundary conditions  \eqref{BB3x}, while continuity
across the interface is encoded in the conditions \eqref{BB4x}.
The kynetic condition \eqref{BB6x} for $h$ is derived from an
incompressibility assumption for the fluid (mostly 
water) and Darcy's law connecting velocity and pressure. Full details
can be found in \cite{KD10} or the earlier \cite{KD02}. The remaining
equations \eqref{BB5x} and \eqref{BB7x} fix an arbitrary initial
configuration. 
If diffusion in the diffusive biofilm layer is assumed to take place
at a much faster time scale than biofilm growth, it can be assumed
that $\varepsilon_L=0$ in \eqref{BB1x}. In that case, $c$ is affine on
$(h,h+L)$ and it follows from~\eqref{BB3x} and $\llbracket c\rrbracket=0$ on $z=h(t)$ that
$c$ is determined on $(h,h+L)$ by $c\bigl(t,h(t)\bigr)$ through
\begin{equation}\label{e0x}
c(t,z)=\frac{c_*-c\bigl(t,h(t)\bigr)}{L}\big[z-h(t)\big]+c\bigl(t,h(t)\bigr).
\end{equation}
Using that $\llbracket \varkappa c_z\rrbracket=0$  on  $z=h(t)$ then yields
$$
c_z\bigl(t,h(t)-\bigr)=\frac{\kappa_L}{\kappa}c_z\bigl(t,h(t)+\bigr)=
\frac{\kappa_L}{\kappa}\frac{c_*-c\bigl(t,h(t)\bigr)}{L}
$$
and one obtains the Robin type boundary condition
\begin{equation}\label{e1}
c\bigl(t,h(t)\bigr)+\frac{L\kappa}{\kappa_L}c_z\bigl( t,h(t)\bigr)=c_*,
\end{equation}
for $c\big |_{[0,h(t)]}$ where here and in the following
$c_z\bigl(t,h(t)\bigr)=c_z\bigl(t,h(t)-\bigr)$. In summary, assuming
that $\varepsilon_L=0$ we obtain the following reduced model
\begin{subequations}\label{EBBx}
\begin{align}
\varepsilon c_t&= \kappa c_{zz}-r(c),  &&0<z<h(t),\quad t>0,\label{EBBx2}\\
c\bigl(t,h(t)\bigr)+\frac{L\kappa}{\kappa_L}c_z\bigl(t,h(t)\bigr)&=c_*,\quad
c_z(t,0)=0, &&t>0,\label{EBBx4}\\
c(0,z)&=c_0(z), && z\in (0,h_0),\label{EBBx5}\\
h_t&=\int_0^h g\big(r(c(t,z))\big)\,\rd z,&& t>0,\label{EBBx6}\\
h(0)&=h_0.\label{EBBx7}
\end{align}
 \end{subequations}
on which we shall focus in the following. Throughout, if not stated
otherwise, we shall assume that
\begin{equation}\label{GeneralAssumptions}
g\in {\rm C}^{1-}(\R), \qquad r\in
{\rm C}^{1}(\R)\ \text{ with }\ sr(s)> 0\ \text{ for }\ s\not= 0,
\end{equation}
where ${\rm C}^{1-}$ refers to Lipschitz continuity. In fact, in all of our analysis only the restrictions of $r$ to $[0,c_*]$ and of $g$ to  $[0,r(c_*)]$ play a role. Note that
\eqref{GeneralAssumptions} implies $r(0)=0$.

\section{The Evolution Problem}
In this section we focus on the full evolutionary system
\eqref{EBBx} and establish the existence of a unique
global strong solution. We then derive some qualitative properties of
the solution including the asymptotic extinction of the biofilm when
the growth rate is negative, the boundedness of the substrate
gradient for initial data with bounded gradient as well as the
preservation of susbstrate monotonicity during the evolution.  
\subsection{Well-Posedness of the Evolution Problem}

We consider the evolution problem~\eqref{EBBx} with  $\varepsilon=1$
for notational simplicity. Introducing dimensionless variables via
\begin{equation}\label{trafo}
y=\frac{z}{h(t)} \quad\text{ and } \quad v(t,y):=c_*-c\bigl(t,yh(t)\bigr),
\end{equation}
the moving boundary problem \eqref{EBBx} with  $\varepsilon=1$ is
equivalent to the following initial (fixed) boundary value problem
with homogeneous boundary conditions
\begin{subequations}
\label{vEBBB}
\begin{align}
&v_t= \frac{\kappa}{h^2(t)}v_{yy}+\frac{h_t(t)}{h(t)} y v_y + r\bigl(
c_*-v(t,y)\bigr),  &&0<y<1,\quad t>0,\label{vEBBB2}\\ 
&\frac{\kappa}{h(t)^2}v_y(t,0)=0, &&t>0,\label{vEBBB3}\\
&\frac{\kappa_L}{Lh(t)}v(t,1)+\frac{\kappa}{h(t)^2} v_y(t,1)=0, &&t>0,\label{vEBBB4}\\
&v(0,y)=v_0(y):=c_*-c_0(yh_0), && 0<y<1,\label{vEBBB5}\\
&h_t(t)=h(t)\int_0^1 g\big(r\bigl(c_*- v(t,y)\bigr)\big)\,\rd y,&& t>0,\label{vEBBB6}\\
&h(0)=h_0,\label{vEBBB7}
\end{align}
\end{subequations}
where we write the boundary conditions \eqref{vEBBB3}-\eqref{vEBBB4}
in this particular form to be consistent with the notation used in \cite{Amann_Teubner}. 

\begin{thm}\label{T10}
Assume~\eqref{GeneralAssumptions} and let $p\in
(1,\infty)$. Given  initial values $h_0\in (0,\infty)$ and $v_0\in \operatorname{W}_p^1\bigl(
(0,1)\bigr)$ with $v_0(y)\in [0,c_*]$ for $y\in (0,1)$, there is a unique
global solution $(v,h)$ of~\eqref{vEBBB}  such that
$$
h\in {\rm C}^1\big([0,\infty),(0,\infty)\big) 
$$
and
$$
v\in {\rm C}\bigl([0,\infty), \operatorname{W}_p^1\bigl( (0,1)\bigr) \bigr) \cap
{\rm C}\big((0,\infty), \operatorname{W}_p^2\big( (0,1)\bigr)\big)\cap
{\rm C}^1\big([0,\infty), {\rm L}_p\big( (0,1)\bigr)\big) 
$$
with $v(t,y)\in [0,c_*]$ for $t\ge 0$ and $y\in (0,1)$. 
\end{thm}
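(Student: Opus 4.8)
We sketch the argument we would follow. Since $g$ is Lipschitz and $r\in{\rm C}^1$, the functional
$$
G(w):=\int_0^1 g\big(r(c_*-w(y))\big)\,\rd y
$$
is locally Lipschitz from $\operatorname{W}_p^1\big((0,1)\big)$ into $\R$ (using the one-dimensional embedding $\operatorname{W}_p^1\hookrightarrow{\rm C}\big([0,1]\big)$, valid for $p>1$), and for given $v$ the height equation \eqref{vEBBB6}--\eqref{vEBBB7} is a linear ODE whose unique solution
$$
h(t)=h_0\exp\Big(\int_0^t G\big(v(\sigma)\big)\,\rd\sigma\Big)
$$
is automatically positive. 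We would therefore solve \eqref{vEBBB} by a fixed point argument: given $h$ in a small closed ball of ${\rm C}^1\big([0,T]\big)$ around the constant $h_0$ (so $h$ is bounded above and away from $0$), the $v$-equation \eqref{vEBBB2}--\eqref{vEBBB5} is a linear, uniformly parabolic initial-boundary value problem with a Neumann datum at $y=0$, a Robin datum at $y=1$, and continuous coefficients $\kappa/h(t)^2$ and $h_t(t)/h(t)$; by the linear theory underlying \cite{Amann_Teubner} (normal ellipticity being trivial for a scalar second-order operator with positive diffusion, and the boundary operators having been written in \eqref{vEBBB3}--\eqref{vEBBB4} precisely in the form required there) it has a unique solution $v$ in the maximal regularity class compatible with $v_0\in\operatorname{W}_p^1\big((0,1)\big)$. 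Feeding $v$ back into the displayed formula for $h$ defines the iteration map, and Lipschitz estimates for $G$, for $h\mapsto h^{-2}$ (uniform on the ball), and for $s\mapsto r(c_*-s)$, together with the maximal regularity estimate applied to differences of solutions (which have vanishing initial trace, hence norm small with $T$), make this map a contraction for $T=T\big(h_0,\|v_0\|_{\operatorname{W}_p^1}\big)>0$ small. This yields a unique local solution with the stated regularity, and a standard continuation argument produces a maximal existence time $T_{\max}$ with the dichotomy: either $T_{\max}=\infty$, or $\|v(t)\|_{\operatorname{W}_p^1}\to\infty$, or $h(t)$ leaves every compact subset of $(0,\infty)$, as $t\uparrow T_{\max}$.

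The next, and central, step is the a priori bound $0\le v(t,y)\le c_*$ on $[0,T_{\max})$, obtained from the parabolic maximum principle. The structural hypothesis $s\,r(s)>0$ for $s\ne0$ gives the reaction term the correct sign at both thresholds: if $v>c_*$ then $c_*-v<0$, so $r(c_*-v)<0$; if $v<0$ then $c_*-v>c_*>0$, so $r(c_*-v)>0$. Writing $r(c_*-v)=-(v-c_*)\int_0^1 r'\big(\theta(c_*-v)\big)\,\rd\theta$ exhibits the equation for $w:=v-c_*$ (and, analogously, for $-v$) as a linear parabolic equation with a bounded zero-order coefficient; after the standard exponential rescaling fixing the sign of that coefficient, the maximum principle applies once the boundary contributions are checked: at $y=0$ condition \eqref{vEBBB3} removes the normal derivative (Hopf), while at $y=1$ a hypothetical positive maximum of $w$ attained at a positive time would, by \eqref{vEBBB4}, force $v_y(t,1)=-\tfrac{\kappa_L h(t)}{L\kappa}\,v(t,1)<0$, an outward normal derivative of the wrong sign — a contradiction; the lower bound $v\ge0$ follows symmetrically, since $r(c_*-v)>0$ wherever $v<0$ and $v_0\ge0$. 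Hence $v$ stays in $[0,c_*]$, so $|G(v(t))|\le M:=\sup_{s\in[0,c_*]}\big|g\big(r(s)\big)\big|<\infty$, and consequently $h_0\,e^{-Mt}\le h(t)\le h_0\,e^{Mt}$ for all $t\in[0,T_{\max})$.

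It remains to exclude blow-up of $\|v(t)\|_{\operatorname{W}_p^1}$ in finite time. Fix $T<\infty$ and suppose $T<T_{\max}$; on $[0,T]$, $v$ solves a linear uniformly parabolic problem whose leading coefficient $\kappa/h^2$ is bounded above and below, whose drift coefficient $\big(h_t/h\big)y=G(v)\,y$ is bounded, and whose inhomogeneity $r(c_*-v)$ is bounded in ${\rm L}_\infty$, with $v_0\in\operatorname{W}_p^1$; linear parabolic ${\rm L}_p$-regularity then gives $\sup_{[0,T]}\|v(t)\|_{\operatorname{W}_p^1}\le C(T)<\infty$, together with the parabolic smoothing $v\in{\rm C}\big((0,T],\operatorname{W}_p^2\big)$. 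Since $h$ also remains in a compact subset of $(0,\infty)$ on $[0,T]$ by the previous paragraph, none of the blow-up alternatives can occur; hence $T_{\max}=\infty$ and the global solution has the asserted regularity. Finally $G\circ v\in{\rm C}\big([0,\infty)\big)$, so $h_t=h\,(G\circ v)\in{\rm C}\big([0,\infty)\big)$ and $h\in{\rm C}^1\big([0,\infty),(0,\infty)\big)$, and uniqueness is inherited from the local statement.

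The step I expect to be the main obstacle is the global a priori control of $v$: the maximum principle has to be set up with care because of the mixed Neumann/Robin boundary conditions and the nonlinearity $r$ (which is only assumed increasing on the positive half-line), and the resulting ${\rm L}_\infty$-bound must then be upgraded, via parabolic regularity, to the $\operatorname{W}_p^1$-bound that powers the continuation criterion. A secondary technical point is identifying, uniformly in $p\in(1,\infty)$, the correct interpolation/trace space so that $\operatorname{W}_p^1$ is exactly the space in which the local solution is continuous; otherwise the local existence and uniqueness are routine once the linear theory of \cite{Amann_Teubner} is in place.
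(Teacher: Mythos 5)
Your overall architecture coincides with the paper's: local well-posedness plus a blow-up alternative, invariance of $[0,c_*]$ via a maximum principle, and then an a priori $\operatorname{W}_p^1$ bound from linear parabolic theory (made possible because the $\operatorname{L}_\infty$ bound controls $G(v)$ and hence confines $h$ to a compact subset of $(0,\infty)$ on finite intervals), ruling out finite-time blow-up. Your second and third steps are essentially the paper's Steps 3 and 4. Two presentational differences are worth noting: the paper runs the maximum principle through \cite[Theorem~15.1]{Amann_Teubner}, phrasing the two bounds as nonnegativity of $v$ and of $u=c_*-v$ (using $r(c_*)>0$, $r(0)=0$ and the sign of the Robin data), which works at the weak regularity actually available, whereas your Hopf-lemma argument tacitly assumes classical regularity up to the parabolic boundary; and in the globality step the paper does not absorb the drift $G(v)\,y\,v_y$ into the operator but treats it as a perturbation $\hat f$ with the linear bound $\|\hat f(t,v)\|_{\operatorname{L}_p}\le m_0(1+\|v\|_{\operatorname{W}_p^1})$ against a pure diffusion operator whose coefficients are Lipschitz in time via \eqref{hhh} — this is what makes the assertion ``linear parabolic regularity gives $\sup_{[0,T]}\|v\|_{\operatorname{W}_p^1}<\infty$'' actually checkable.

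The genuine divergence — and the one place where your proposal has a gap rather than a variant — is local existence. You decouple the system and run a contraction in $h$, invoking ``the maximal regularity class compatible with $v_0\in\operatorname{W}_p^1$''. For general $p\in(1,\infty)$ no such class exists in the usual $\operatorname{L}_q$-maximal-regularity sense: the temporal trace space $(\operatorname{L}_p,\operatorname{W}^2_{p,\mathcal{B}})_{1-1/q,q}$ is a Besov space $B^{2-2/q}_{p,q}$, which equals $\operatorname{W}_p^1$ essentially only when $p=q=2$; so continuity of $v$ with values in $\operatorname{W}_p^1$ down to $t=0$, and the smallness-in-$T$ of the difference estimates your contraction needs, do not follow from the standard theory. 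This is exactly what the paper's setup is engineered for: it extrapolates to the negative-order base space $\operatorname{H}^{2\sigma-2}_{p,\mathcal{B}}$ with $2\sigma\in(1,1+\frac1p)$, so that $\operatorname{W}^1_{p,\mathcal{B}}=E_\alpha$ with $\alpha=\frac{3-2\sigma}{2}<1$ is an intermediate space of the scale, and then applies Amann's quasilinear theorem \cite[Theorem~12.1]{Amann_Teubner} to the coupled system $\dot w=\mathbb{A}(w)w+f(w)$, $w=(v,h)$, obtaining existence, uniqueness, the semiflow property and the blow-up alternative \eqref{globalex} in one stroke, with a subsequent bootstrap yielding ${\rm C}(\dot J,\operatorname{W}_p^2)$. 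Your scheme can be repaired, but only by importing comparable machinery (weighted/singular maximal regularity or the same interpolation-extrapolation scales), so this should be regarded as the main obstacle rather than a secondary one.
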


\begin{proof}
{\bf Step 1.} In order to establish the local existence of a solution we recast~\eqref{vEBBB} as a quasilinear equation for $w=(v,h)$ to
which the theory described in \cite{Amann_Teubner} applies. To this
end, define (formally) the differential operator
$$
\mathcal{A}(w):= \frac{\kappa}{h^2} \partial_y^2 +G(v)\,  y \,\partial_y 
$$
acting on functions defined on $\Omega:=(0,1)$ with
$$
G(v):=\int_0^1 g\big(r\big(c_*-v(y)\big)\big)\,\rd y
$$
and the boundary operator
$$
\mathcal{B}(w)u:=\begin{cases} \frac{\kappa}{h^2}\partial_y u(0), &
  y=0,\\[1ex]
 \frac{\kappa_L}{Lh}u(1)+\frac{\kappa}{h^2} \partial_yu(1), & y=1,
\end{cases} 
$$
 on $\partial\Omega=\{0,1\}$.
If $w=(v,h)\in {\rm C}(\overline{\Omega})\times (0,\infty)$, then
$\bigl(\mathcal{A}(w),\mathcal{B}(w)\bigr)\in \mathcal{E}^1(\Omega)$ due to
\cite[Example~4.3~(e)]{Amann_Teubner}; that is,
$\bigl(\mathcal{A}(w),\mathcal{B}(w)\bigr)$ is normally elliptic on
$\Omega$. In fact, we have 
\begin{equation}\label{AB}
(\mathcal{A},\mathcal{B})\in
{\rm C}^{1-}\bigl({\rm C}(\overline{\Omega})\times
(0,\infty),\mathcal{E}^1(\Omega)\bigr). 
\end{equation}
For ${\rm S}\in\{\operatorname{H},\operatorname{W}\}$ we define the spaces (see
\cite[Section~7]{Amann_Teubner} with $\delta=1$ in the notation
therein, cf. \cite[Equation (7.5)]{Amann_Teubner}) 
$$
\operatorname{S}_{p,\mathcal{B}(w)}^s(\Omega):=\begin{cases} \big\{u\in
  \operatorname{S}_{p}^s(\Omega)\,\vert\, \mathcal{B}(w) u=0 \ \text{on }
  \{0,1\}\big\}, &\text{if }s\in(1+\frac{1}{p},2],\\[1ex]
 \operatorname{S}_{p}^s(\Omega) , &\text{if }s\in(-1+\frac{1}{p},1+\frac{1}{p}),\\[1ex]
\big( \operatorname{S}_{p'}^{-s}(\Omega)\big)' , &\text{if
}s\in(-2+\frac{1}{p},-1+\frac{1}{p}].\\[1ex]
\end{cases} 
$$
It is an important observation that
$\operatorname{S}_{p,\mathcal{B}(w)}^s=\operatorname{S}_{p,\mathcal{B}}^s$ is independent of $w$ for
$s\in(-2+\frac{1}{p}, 1+\frac{1}{p})$. We now fix $2\sigma\in
(1,1+\frac{1}{p})$ and denote the $\operatorname{H}_{p,\mathcal{B}}^{2\sigma-2}$-realization of
$(\mathcal{A}(w),\mathcal{B}(w))$ by $A_{\sigma-1}(w)$. Using
\eqref{AB}, we can invoke \cite[Theorem~8.3]{Amann_Teubner} to see that
\begin{equation}\label{Asigma}
  A_{\sigma-1}\in{\rm C}^{1-}\big(
  {\rm C}(\overline{\Omega})\times
  (0,\infty),\mathcal{H}\bigl(\operatorname{H}_{p,\mathcal{B}}^{2\sigma}(\Omega),
  \operatorname{H} _{p,\mathcal{B}}^{2\sigma-2}(\Omega)\bigr)\big). 
\end{equation}
We then put
$$
E_0:=\operatorname{H}_{p,\mathcal{B}}^{2\sigma-2}(\Omega)\times \R \quad \text{ and }\quad
E_1:=\operatorname{H} _{p,\mathcal{B}}^{2\sigma}(\Omega)\times \R 
$$
and let $E_\theta:=(E_0,E_1)_{\theta,p}$ be the real interpolation space if $\theta\in (0,1)\setminus\{1-\sigma,\frac{3}{2}-\sigma\}$ respectively $E_\theta:=[E_0,E_1]_{\theta}$ the complex  interpolation space if $\theta\in \{1-\sigma,\frac{3}{2}-\sigma\}$; that is,
$$
E_\theta=\operatorname{W}_{p,\mathcal{B}}^{2\sigma-2+2\theta}(\Omega)\times
\R,\quad \theta\in(0,1).
$$
Let $2\alpha:=3-2\sigma\in (2-\frac{1}{p},2)$ and $2\beta\in
(2+\frac{1}{p}-2\sigma,2\alpha)$. Then
$E_\alpha=\operatorname{W}_{p,\mathcal{B}}^{1}(\Omega)\times \R$ and $E_\beta
\hookrightarrow {\rm C}(\overline{\Omega})\times\R$. Setting
$$
\mathbb{A}(w):=\left[\begin{matrix}
A_{\sigma-1}(w)&0\\ 0 & G(v)
\end{matrix}\right],
$$
we thus obtain that
\begin{equation}\label{AA}
\mathbb{A}\in {\rm C}^{1-}\big(X_\beta,\mathcal{H}(E_1,E_0)\big)
\end{equation}
with the open subset
$X_\beta:=\operatorname{W}_{p,\mathcal{B}}^{2\beta-2+2\theta}(\Omega)\times
(0,\infty)$ of $E_\beta$. Finally, setting
$$ f(w):=\big(r(c_*-v),0\big) 
$$
we have that
\begin{equation}\label{fLower}
f\in {\rm C}^{1-}\big(X_\beta,E_\gamma\big)
\end{equation}
where $E_\gamma={\rm L}_p(\Omega)\times \R$ with
$2\gamma:=2-2\sigma \in (1-\frac{1}{p},2-\frac{1}{p})$. Consequently,
problem~\eqref{vEBBB} can be written as the quasilinear abstract Cauchy problem
\begin{equation}\label{CP}
\dot w=\mathbb{A}(w)w +f(w),\  t>0,\quad w(0)=w_0:=(v_0,h_0).
\end{equation}
Since $0<\gamma<\beta<\alpha<1$, it follows from \eqref{AA},
\eqref{fLower}, and \cite[Theorem~12.1]{Amann_Teubner} that for each
$w_0\in \operatorname{W}_{p,\mathcal{B}}^{1}(\Omega)\times (0,\infty)$
there is a unique maximal (weak) solution $w=(v,h)$ of \eqref{CP} on a
maximal interval $J=\bigl[0,t^+(w_0)\bigr)$ satisfying (with $\dot J:=J\setminus\{0\})$
\begin{equation}\label{w}
w=w(\cdot,w_0)\in {\rm C}\big(J,E_\alpha\big) \cap {\rm C}\big(\dot
J, E_1\big)\cap {\rm C}^1\big(J, E_0\big),
\end{equation}
and the mapping $\bigl[(t,w_0)\mapsto w(t,w_0)\bigr]$ defines a
semiflow on ${\rm W}_{p,\mathcal{B}}^{1}(\Omega)\times
(0,\infty)$. Whenever $t^+(w_0)<\infty$, it must hold that
\begin{equation}\label{globalex}
h(t)\to 0=\partial (0,\infty)\ \text{ or }\
\|v(t)\|_{\operatorname{W}_p^1}+h(t)\to\infty
\end{equation}
as $t\nearrow t^+(w_0)$. Since the right-hand side of
$$
h_t(t)=h(t)\int_0^1 g\big( r\bigl( v(t,y)+c_*\bigr)\big)\,\rd y
$$
belongs to ${\rm C}(J,\R)$ as a function of $t$, we have
in fact that $h\in {\rm C}^1\bigl( J,(0,\infty)\bigr)$.\\

{\bf Step 2.} In order to improve the regularity of $v$ choose
$1-\frac{1}{p}<2\nu<2\sigma-\frac{1}{p}$. This is possible since
$2\sigma\in (1,1+\frac{1}{p})$. Then we infer from \eqref{w} that
\begin{align}
v\in {\rm C}\big(\dot J,\operatorname{H}_p^{2\sigma}(\Omega)\big)
\cap{\rm C}^1\big(\dot J,\operatorname{H}_p^{2\sigma-2}(\Omega)\big)
  &\hookrightarrow  {\rm C}^\nu\big(\dot J,
\operatorname{W} _p^{2\sigma-2\nu}(\Omega)\big)\hookrightarrow
{\rm C} ^\nu\big(\dot J,{\rm C}(\overline{\Omega})\big)\label{vv}
\end{align}
so that
$$
G(v)=\left[ t\mapsto\int_0^1 g\big( r\bigl( c_*-v(t,y)
  \bigr)\big)\,\rd y\right]\in {\rm C}^\nu(\dot J,\R).
$$
We conclude that
\begin{equation}\label{AAA}
\Big[ t\mapsto \Big( \mathcal{A}\bigl( w(t) \bigr),\mathcal{B}\bigl(
w(t) \bigr)\Big)\Big]\in{\rm C}^{\nu}\big(\dot
J,\mathcal{E}^1(\Omega)\big),\quad 2\nu>2-1-\frac{1}{p}.
\end{equation}
Clearly, \eqref{vv} also entails that 
\begin{equation}\label{fUpper}
F\in {\rm C}^\nu\big(\dot J,{\rm L}_p(\Omega)\big)
\end{equation}
for  $F(t):=r\bigl( c_*-v(t,\cdot)\bigr)$. Consequently, it follows
from \eqref{AAA}, \eqref{fUpper}, and
\cite[Theorem~11.3]{Amann_Teubner} (see also
\cite[Theorem~13.3]{Amann_Teubner}) that
$$
v\in {\rm C}\big(J, \operatorname{W}_p^1(\Omega)\big) \cap
{\rm C}\big(\dot J, \operatorname{W} _p^2(\Omega)\big)\cap
{\rm C}^1\big(\dot J, {\rm L} _p(\Omega)\big) 
$$
satisfies
\begin{alignat*}{2}
v_t(t)&= \mathcal{A}\bigl( w(t)\bigr)v(t)+F(t), \quad && t\in \dot J,\\
\mathcal{B}\bigl( w(t)\bigr)v(t)&=0, && t\in \dot J.
\end{alignat*}
Hence, $v$ has the  regularity
properties stated in the assertion.\\ 

{\bf Step 3.} Next we establish that $0\le v(t,y)\le c_*$ for $t\ge 0$ and $y\in [0,1]$.  Note that $v$ satisfies
\begin{alignat*}{2}
v_t(t)&= \mathcal{A}(t)v(t)+r(c_*-v), \quad&&t\in \dot J,\\
\mathcal{B}(t) v(t)&=0, && t\in \dot J,\\
 v(0)&=v_0,&&
\end{alignat*}
where $(\mathcal{A}(t),\mathcal{B}(t)):=\bigl(\mathcal{A}\bigl( w(t)
\bigr),\mathcal{B}\bigl( w(t) \bigr)\bigr)$ and $r\in
{\rm C}^1(\R)$. 
Now, since $v_0\geq 0$ and $r(c_*)>0$, we infer
from~\cite[Theorem~15.1]{Amann_Teubner} (the assumptions
(15.1)-(15.3) therein are easily checked with $N=1$ and $\delta=1$)
that $v(t,\cdot)\ge 0$ for $t\in J$. Similarly, setting $u:=c_*-v$ we have
\begin{alignat*}{2}
u_t(t)&= \mathcal{A}(t)u(t)-r(u), \quad &&t\in \dot J,\\
\mathcal{B}(t)u(t)&=c_*, &&t\in \dot J,\\
u(0)&=u_0:=c_*-v_0,&&
\end{alignat*}
with $c_*> 0$, $r(0)=0$, and $u_0\ge 0$ and it again follows from
\cite[Theorem~15.1]{Amann_Teubner}  that $u(t,\cdot)\ge 0$ for $t\in
J$. Therefore, we indeed have $0\le v(t,y)\le c_*$ for $t\in J$ and
$y\in [0,1]$.\\

{\bf Step 4.} Finally, we prove that the solution exists globally;
that is, that we always have $J=[0,\infty)$. To this end, we use
the just established bound $0\le v(t,y)\le c_*$ for $t\in J$ and $y\in
[0,1]$ to derive the existence of $M>0$ such that
\begin{equation}\label{GG}
\vert G(v(t))\vert\le M, \quad t\in J.
\end{equation} 
Assume towards a contradiction that
$t^+(w_0)<\infty$. Then~\eqref{vEBBB6} implies that
\begin{equation}\label{hh}
e^{-Mt}h_0\le h(t)\le e^{Mt}h_0,\quad t\in J,
\end{equation}
and hence 
$$
\vert h(t)-h(s)\vert=\left\vert\int_s^t h(\tau) G(v(\tau))\,\rd
  \tau\right\vert\le e^{Mt^+(w_0)}h_0 M\vert t-s\vert,\quad t,s\in J.
$$
That is, 
\begin{equation}\label{hhh}
h\in {\rm BUC}^{1-}(J,(0,\infty)).
\end{equation}
For $t\in J$ define the differential operator $\hat{\mathcal{A}}(t):= \frac{\kappa}{h(t)^2} \partial_y^2$
in $\Omega=(0,1)$ and the boundary operator $\hat{\mathcal{B}}(t)$
 by
$$
\hat{\mathcal{B}}(t)u:=\begin{cases} \frac{\kappa}{h(t)^2}u_y(0), & y=0,\\[1ex]
 \frac{\kappa_L}{Lh(t)}u(1)+\frac{\kappa}{h(t)^2} u_y(1), & y=1,
\end{cases} 
$$
on $\partial\Omega=\{0,1\}$. Fix $2\sigma\in (1,1+1/p)$ and let
$\hat{A}(t)$ be the
$\operatorname{H}_{p,\mathcal{B}}^{2\sigma-2}$-realization of
$\bigl( \hat{\mathcal{A}}(t),\hat{\mathcal{B}}(t)\bigr)$. Then, as in {\bf Step 1}, we deduce from \cite[Theorem~8.3]{Amann_Teubner}
and \eqref{hhh}  that
\begin{equation}\label{Ahatsigma}
\hat{A}\in
{\rm BUC}^{1-}\big(J,\mathcal{H}(\hat{E}_1,\hat{E}_0)\big),
\end{equation}
where
$$
\hat{E}_0:=\operatorname{H}_{p,\hat{\mathcal{B}}}^{2\sigma-2}(\Omega)
\quad\text{ and } \quad\hat{E}_1:=\operatorname{H}_{p,\hat{\mathcal{B}}}^{2\sigma}(\Omega) 
$$
are independent of $t\in J$ since
$-2+\frac{1}{p}<2\sigma-2<2\sigma<1+\frac{1}{p}$ 
and coincide with the corresponding spaces introduced in {\bf Step 1}. Set 
$$
\hat{f}(t,v):=G\bigl( v(t) \bigr)y v_y+r(c_*-v(t)), \quad t\in J,
$$
and, using \eqref{GG}, observe that the linear bound
\begin{align}\label{fest}\notag
\|\hat{f}(t,v)\|_{\hat{E}_\gamma}&\le \|G\bigl( v (t) \bigr) y v_y\|_{{\rm L}_p} +
\|r\bigl(c_*- v(t) \bigr)\|_\infty\\&\le M\|v\|_{\operatorname{W}_p^1}+
\|r\|_{\infty,[0,c_*]}\le m_0(1+\|v\|_{\hat{E}_\alpha})
\end{align} 
holds for $t\in J$, where $0<2\gamma=2-2\sigma<2\alpha=3-2\sigma<1$ as
 in {\bf Step 1} so that
$\hat{E}_\alpha=\operatorname{W}_p^1\bigl( (0,1) \bigr)$ and
$\hat{E}_\gamma={\rm L} _p\bigl( (0,1) \bigr)$. Since
$v$ satisfies the abstract semilinear Cauchy problem 
\begin{align*}
  v_t(t)=\hat{A}(t)v(t) + \hat{f}(t,v(t)),\quad  t\in \dot J,\qquad v(0)=v_0,
\end{align*}
we conclude from \eqref{Ahatsigma} and \eqref{fest} that there is some
$C=C\bigl(t^+(w_0)\bigr)>0$ such that 
$$
\|v(t)\|_{\operatorname{W}_p^1}=\|v(t)\|_{\hat{E}_\alpha}\le C,
\quad t\in J.
$$
Thanks to \eqref{hh} we deduce that \eqref{globalex} cannot occur. This
contradicts the assumption that~$t^+(w_0)<\infty$. Consequently, the
solution must exist globally in time; that is, we have
that~$t^+(w_0)=\infty$. This concludes the proof.
\end{proof}

\subsection{Qualitative Aspects}

The subsequent result says that the biofilm disappears over time for a
negative biofilm growth rate. In this case, asymptotically the domain
\mbox{$[0,h(t)+L]$} collapses to the diffusive bulk layer $[0,L]$ with
uniform substrate concentration~$c=c_*$ in it (recall~\eqref{trafo}).

\begin{prop}\label{P5}
Assume~\eqref{GeneralAssumptions} and that $g(r(s))< 0$ for $s\in
[0,c_*]$. Given $p\in (1,\infty)$ and initial values $h_0\in
(0,\infty)$ and $v_0\in {\rm W}_p^1\bigl(
(0,1)\bigr)$ with $v_0(y)\in [0,c_*]$ for $y\in (0,1)$ let  $(v,h)$
be the global solution of~\eqref{vEBBB}. Then 
$$
h(t)\searrow 0\quad \text{in }\ \R
$$
and
$$
v(t,\cdot)\rightarrow 0\quad \text{ in }\ {\rm L}_p\big((0,1)\big)
$$
as $t\rightarrow \infty$.
\end{prop}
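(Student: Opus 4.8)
The plan is to exploit the sign condition $g(r(s))<0$ on $[0,c_*]$ directly in the $h$-equation \eqref{vEBBB6}. Since $0\le v(t,y)\le c_*$ for all $t\ge 0$ and $y\in(0,1)$ by Theorem \ref{T10}, we have $r(c_*-v(t,y))\in[\,0,r(c_*)\,]$ (using $r(0)=0$, $r'>0$), so $g(r(c_*-v(t,y)))$ lies in the range $g([0,r(c_*)])\subset(-\infty,0)$. By continuity of $g\circ r$ on the compact interval $[0,r(c_*)]$ there is a constant $\delta>0$ with $g(r(s))\le -\delta$ for all $s\in[0,r(c_*)]$, hence
\begin{equation*}
h_t(t)=h(t)\int_0^1 g\big(r(c_*-v(t,y))\big)\,\rd y\le -\delta\, h(t),\qquad t>0.
\end{equation*}
Gr\"onwall's inequality then gives $0<h(t)\le h_0 e^{-\delta t}$, and since $h_t<0$ the function $h$ is strictly decreasing, so $h(t)\searrow 0$ as $t\to\infty$. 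This establishes the first claim with essentially no work beyond the a priori bound $v\in[0,c_*]$ from Theorem \ref{T10}.

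For the decay of $v$ in ${\rm L}_p((0,1))$, the idea is an energy estimate on \eqref{vEBBB2} that shows the ${\rm L}_2$-norm of $v$ decays, after which the uniform bound $0\le v\le c_*$ upgrades ${\rm L}_2$-convergence to ${\rm L}_p$-convergence for every $p\in(1,\infty)$ (indeed $\|v(t)\|_{{\rm L}_p}^p\le c_*^{p-2}\|v(t)\|_{{\rm L}_2}^2$ if $p\ge 2$, and for $1<p<2$ one uses H\"older together with the finite measure of $(0,1)$). Testing \eqref{vEBBB2} with $v$ and integrating by parts, the diffusion term contributes $-\tfrac{\kappa}{h^2}\int_0^1 v_y^2\,\rd y$ plus a boundary term at $y=1$ which, by the Robin condition \eqref{vEBBB4}, equals $-\tfrac{\kappa_L}{Lh}v(t,1)^2\le 0$; the transport term $\tfrac{h_t}{h}\int_0^1 y v v_y\,\rd y = \tfrac{h_t}{2h}\int_0^1 y\,(v^2)_y\,\rd y = \tfrac{h_t}{2h}\big(v(t,1)^2-\int_0^1 v^2\,\rd y\big)$ is controlled using $|h_t/h|\le M$; and the reaction term is handled by writing $r(c_*-v)=r(c_*-v)-r(c_*)+r(c_*)$, noting $r(c_*-v)-r(c_*)=-\int_{c_*-v}^{c_*}r'(\tau)\,\rd\tau$ has the same sign as $-v$ so that $\int_0^1 v\,[\,r(c_*-v)-r(c_*)\,]\,\rd y\le 0$, while the leftover $r(c_*)\int_0^1 v\,\rd y$ is a bounded forcing term. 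This yields a differential inequality of the form $\tfrac{\rd}{\rd t}\|v\|_{{\rm L}_2}^2\le C\|v\|_{{\rm L}_2}^2 + C'\|v\|_{{\rm L}_2}$, which alone does not force decay; the decay must come from the shrinking of the domain, i.e.\ from $h(t)\to 0$ making the diffusion coefficient $\kappa/h(t)^2\to\infty$ blow up and thereby forcing strong dissipation.

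The cleanest way to make the dissipation mechanism rigorous is to return to the original variable $c$ via \eqref{trafo}: on the shrinking domain $(0,h(t))$ the function $c(t,\cdot)$ satisfies \eqref{EBBx2}--\eqref{EBBx4}, and since the domain length $h(t)\to 0$ one can hope to show $c(t,\cdot)\to c_*$ uniformly, equivalently $v(t,\cdot)\to 0$ uniformly on $(0,1)$. Concretely I would compare $v$ with the solution of the linear problem obtained by dropping the (favorably-signed, for $v$) reaction term $r(c_*-v)\ge 0$ — wait, that term has the \emph{wrong} sign for a comparison forcing decay, so instead I would use the Robin boundary condition together with a Poincar\'e-type inequality adapted to the boundary operator: on $(0,1)$ with the homogeneous Neumann condition at $0$ and Robin condition at $1$, there is $c_P>0$ with $\int_0^1 v^2\,\rd y\le c_P\big(\int_0^1 v_y^2\,\rd y + v(t,1)^2\big)$, so the dissipation terms in the energy identity dominate $-\min\{\kappa/h^2,\,\kappa_L/(Lh)\}\cdot c_P^{-1}\|v\|_{{\rm L}_2}^2$; since $h(t)\le h_0e^{-\delta t}\to 0$, the coefficient $\min\{\kappa/h^2,\kappa_L/(Lh)\}\to\infty$ and eventually beats both the constant $C$ from the transport term and absorbs the linear forcing $r(c_*)\|v\|_{{\rm L}_2}$, giving $\tfrac{\rd}{\rd t}\|v\|_{{\rm L}_2}^2\le -\|v\|_{{\rm L}_2}^2$ for $t$ large, hence $\|v(t)\|_{{\rm L}_2}\to 0$. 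Finally the uniform bound $0\le v\le c_*$ gives ${\rm L}_p$-convergence as noted above. The main obstacle is precisely this last point: the reaction term $r(c_*-v)$ is nonnegative and acts as a persistent source that a naive energy argument cannot overcome, so the proof genuinely needs the geometric fact $h(t)\to 0$ to drive the diffusive dissipation coefficient to infinity; getting the correct $h$-dependent Poincar\'e constant and verifying that the dissipation ultimately dominates \emph{both} the transport term and the source term is where the real care is required.
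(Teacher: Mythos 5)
Your strategy coincides with the paper's proof in all essentials: the uniform bound $\max_{[0,c_*]}g\circ r=:-\delta<0$ giving $h(t)\le h_0e^{-\delta t}\searrow 0$; the ${\rm L}_2$ energy identity with the Robin boundary term $-\tfrac{\kappa_L}{Lh}v(t,1)^2$, the transport term controlled by $\|g\|_\infty$, the reaction term treated as a bounded source, and the Poincar\'e-type inequality $\|v\|_{{\rm L}_2}^2\le 2\bigl(|v(1)|^2+\|v_y\|_{{\rm L}_2}^2\bigr)$; the observation that $\min\{\kappa/h^2,\kappa_L/(Lh)\}\to\infty$ supplies the dissipation; and the upgrade to ${\rm L}_p$ via $0\le v\le c_*$. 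The one step that does not work as literally written is the final absorption: since the source term $\int_0^1 v\,r(c_*-v)\,\rd y$ is bounded by a constant $R_0$ that does not vanish with $\|v\|_{{\rm L}_2}$, no choice of dissipation coefficient yields $\tfrac{\rd}{\rd t}\|v\|_{{\rm L}_2}^2\le -\|v\|_{{\rm L}_2}^2$ for all large $t$ --- near $\|v\|_{{\rm L}_2}=0$ the constant (or linear-in-$\|v\|_{{\rm L}_2}$) source always dominates the quadratic dissipation. What one actually obtains, and what the paper does, is that for every $q>0$ there is $t_0$ with $\tfrac{\rd}{\rd t}\|v\|_{{\rm L}_2}^2\le -q\|v\|_{{\rm L}_2}^2+R_0$ for $t\ge t_0$, hence $\limsup_{t\to\infty}\|v(t)\|_{{\rm L}_2}^2\le R_0/q$, and letting $q\to\infty$ gives the decay; this limiting argument is the missing closing move in your sketch. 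A further minor point: your sign claim for $r(c_*-v)-r(c_*)$ invokes $r'\ge 0$, which is not among the hypotheses of this proposition; it is harmless because only the crude bound $0\le v\,r(c_*-v)\le c_*\|r\|_{\infty,[0,c_*]}$ is needed.
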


\begin{proof}
We may assume that $p\ge 2$ due to the regularity of $v$. 
Recall from Theorem~\ref{T10} that $0\le v(t,y)\le c_*$ for $t\ge 0$
and $y\in [0,1]$ and, consequently, that $G(v(t))< 0$ for $t\ge 0$ by
assumption. Therefore, $h(t)\searrow 0$ as $t\to\infty$
by~\eqref{vEBBB6}. We then test~\eqref{vEBBB2} by $v$ and integrate by
parts, using the boundary conditions~\eqref{vEBBB3}-\eqref{vEBBB4}, to
obtain
\begin{align*}
  \frac{\rd}{\rd t} \frac{1}{2}\|v(t)\|_{{\rm L}_2(0,1)}^2&=
  \frac{\kappa}{h^2(t)}\int_0^1 vv_{yy}\,\rd y+G(v(t))\int_0^1 y
  \partial_y\left(\frac{1}{2}v^2\right)\,\rd y+ \int_0^1 v r(c_*-v)\,\rd y\\
  &=-\frac{\kappa}{h^2(t)}\int_0^1 \vert v_{y}\vert^2\,\rd y -
  \frac{\kappa_L}{Lh(t)}\vert v(t,1)\vert^2\\
  &\qquad+\frac{G(v(t))}{2}\vert v(t,1)\vert^2-\frac{G(v(t))}{2}
  \int_0^1\vert v\vert^2\,\rd y+ \int_0^1 v r(c_*-v)\,\rd y. 
\end{align*}
Since 
$$
-\|g\|_{\infty}:=-\|g\|_{\infty,[0,r(c_*)]}\le  G(v(t))\le 0, \qquad
0\le v r(c_*-v)\le c_* \|r\|_{\infty,[0,c_*]}=: R_0/2,
$$
it follows that
\begin{align*}
\frac{\rd}{\rd t} \frac{1}{2}\|v(t)\|_{{\rm L}_2(0,1)}^2&\leq
-\frac{1}{2}\min\left\{ \frac{\kappa}{h^2(t)},\frac{\kappa_L}{Lh(t)}\right\}
\left( 2\int_0^1\vert v_{y}\vert^2\,\rd y + 2\vert v(t,1)\vert^2\right)\\ 
&\phantom{\leq}+\frac{\|g\|_{\infty}}{2}\int_0^1\vert v\vert^2\,\rd y+
R_0/2 .
\end{align*}
Taking into account that
$$
\vert v(y)\vert^2\le 2\vert v(1)\vert^2+2\int_y^1\vert
v_y(s)\vert^2\,\rd s,\quad y\in (0,1),
$$
so that
$$
\|v\|_{{\rm L}_2(0,1)}^2\le 2\left(\vert
  v(1)\vert^2+\|v_y\|_{{\rm L}_2(0,1)}^2\right)
$$
we derive that
\begin{align*}
\frac{\rd}{\rd t} \|v(t)\|_{{\rm L}_2(0,1)}^2&\le 
\left(\|g\|_{\infty}-\min\left\{
\frac{\kappa}{h^2(t)},\frac{\kappa_L}{Lh(t)}\right\}\right)
\|v(t)\|_{{\rm L}_2(0,1)}^2 +R_0.  
\end{align*}
Given $q>0$ arbitrary, there is $t_0>0$ such that 
$$
\|g\|_{\infty}-\min\left\{\frac{\kappa}{h^2(t)},\frac{\kappa_L}{Lh(t)}\right\}\le 
-q,\quad t\ge t_0.
$$
Consequently,
$$
\|v(t)\|_{{\rm L}_2(0,1)}^2\le
e^{-q(t-t_0)}\|v(t_0)\|_{{\rm L} _2(0,1)}^2 +
\frac{1}{q}\left(1-e^{-q(t-t_0)}\right)R_0, \quad t\ge t_0,
$$
and hence
$$
\limsup_{t\to\infty}\|v(t)\|_{{\rm L}_2(0,1)}^2\le\frac{R_0}{q}.
$$
Since $q>0$ was arbitrary, we deduce that
$$
\lim_{t\to\infty}\|v(t)\|_{{\rm L}_2(0,1)}=0.
$$
Noticing that
$$
\|v(t)\|_{{\rm L}_p(0,1)}\le c_*^{1-2/p}
\|v(t)\|_{{\rm L}_2(0,1)}^{2/p}, \quad t>0, 
$$
the assertion follows.
\end{proof}

We next provide gradient estimates for the substrate.

\begin{prop}\label{P6}
Assume~\eqref{GeneralAssumptions} and let $r'\ge 0$. Given  $h_0\in
(0,\infty)$ and \mbox{$v_0\in \operatorname{W}_\infty^1\bigl(
(0,1)\bigr)$} with \mbox{$v_0(y)\in [0,c_*]$} for $y\in (0,1)$, let
$(v,h)$ be the global solution of~\eqref{vEBBB}. Then there exists 
$M>0$ such that 
\begin{equation}\label{bound}
\| v_y(t,\cdot)\|_{\infty}\leq M h(t), \quad t\geq 0.
\end{equation}
Moreover, if
$v_0$ is non-increasing, then
$$
- M h(t)\le v_y(t,y)\le 0, \quad y\in [0,1],\ t\geq 0.
$$
\end{prop}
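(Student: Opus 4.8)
The plan is to control $v_y$ by studying the rescaled gradient $\psi:=v_y/h$, which turns out to satisfy a linear parabolic equation on the \emph{fixed} cylinder $(0,1)\times(0,\infty)$ with a zeroth order coefficient of favourable sign, and then to invoke a parabolic comparison principle. First I would differentiate \eqref{vEBBB2} in $y$: writing $u:=v_y$ one formally obtains
\[
u_t=\frac{\kappa}{h^2}u_{yy}+\frac{h_t}{h}\,(u+yu_y)-r'\bigl(c_*-v\bigr)\,u ,\qquad 0<y<1 ,
\]
and the substitution $u=h\psi$ cancels the term $\tfrac{h_t}{h}u$, leaving
\[
\psi_t=\frac{\kappa}{h^2}\psi_{yy}+\frac{h_t}{h}\,y\psi_y-r'\bigl(c_*-v\bigr)\,\psi ,\qquad 0<y<1 .
\]
By Theorem~\ref{T10} we have $0\le v(t,y)\le c_*$, so the zeroth order coefficient $r'(c_*-v)$ is nonnegative by the hypothesis $r'\ge0$; moreover $h_t/h=G(v(t))$ is bounded by $\|g\|_{\infty,[0,r(c_*)]}$ (as in the proof of Theorem~\ref{T10}), and $\kappa/h^2$ is bounded above and below on each compact time interval since $h\in{\rm C}^1\bigl((0,\infty),(0,\infty)\bigr)$. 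Since $v_0\in\operatorname{W}_\infty^1\subset\operatorname{W}_p^1$ for every $p$, Theorem~\ref{T10} yields $v\in{\rm C}\bigl((0,\infty),\operatorname{W}_p^2(0,1)\bigr)$ for all $p\in(1,\infty)$; one more round of linear parabolic regularity, as in Step~2 of the proof of Theorem~\ref{T10} (equivalently, setting up the equation for $\psi$ directly in the framework of \cite{Amann_Teubner}), makes $\psi$ a genuine strong solution of the displayed equation for $t>0$.

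Next I would read off the boundary and initial data for $\psi$. From $v_y(t,0)=0$ (see \eqref{vEBBB3}) one gets $\psi(t,0)=0$, while \eqref{vEBBB4} gives $v_y(t,1)=-\tfrac{\kappa_L h(t)}{L\kappa}\,v(t,1)$, hence $\psi(t,1)=-\tfrac{\kappa_L}{L\kappa}\,v(t,1)\in\bigl[-\tfrac{\kappa_L c_*}{L\kappa},0\bigr]$ because $0\le v(t,1)\le c_*$; note in particular that this boundary value has a favourable sign. Finally $\psi(0,\cdot)=v_0'/h_0$ with $\|\psi(0,\cdot)\|_\infty\le\|v_0'\|_\infty/h_0$. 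Set $M:=\max\{\kappa_L c_*/(L\kappa),\ \|v_0'\|_\infty/h_0\}$. On any slab $(0,1)\times(0,T]$, the constant $M$ is a supersolution of the $\psi$-equation (since $r'(c_*-v)M\ge0$) and dominates $\psi$ on the parabolic boundary $\{t=0\}\cup\bigl(\{0,1\}\times[0,T]\bigr)$, and $-M$ is a subsolution lying below $\psi$ there. The parabolic maximum principle (in its form valid for strong solutions with bounded, continuous coefficients satisfying the sign condition on the zeroth order term) then gives $|\psi(t,y)|\le M$ on $(0,1)\times[0,T]$; as $M$ is independent of $T$ this holds for all $t\ge0$. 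Unwinding $v_y=h\psi$ yields $\|v_y(t,\cdot)\|_\infty\le Mh(t)$, i.e.\ \eqref{bound} (and the case $t=0$ is already covered by the choice of $M$). For the monotonicity assertion, if $v_0$ is non-increasing then $\psi(0,\cdot)=v_0'/h_0\le0$; combined with $\psi(t,0)=0$ and $\psi(t,1)\le0$, comparison with the supersolution $0$ forces $\psi\le0$, that is $v_y\le0$, which together with \eqref{bound} gives $-Mh(t)\le v_y(t,y)\le0$.

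The main obstacle is the regularity bookkeeping required to turn the formal differentiation into a rigorous strong formulation up to the lateral boundary $y\in\{0,1\}$: with only $r\in{\rm C}^1$ the coefficient $r'(c_*-v)$ is merely continuous in $(t,y)$, and near $t=0$ one has only $v_0\in\operatorname{W}_\infty^1$, so the bootstrap and the passage $t\to0^+$ must be handled with some care (it may be cleanest to treat the $\psi$-equation, or equivalently the equation $\phi_t=\kappa\phi_{zz}-r'(c)\phi$ satisfied by $\phi=c_z$ on the moving domain $0<z<h(t)$, directly within the theory of \cite{Amann_Teubner}). Once $\psi$ is known to be an admissible solution, the comparison argument is routine, the three ingredients $r'\ge0$, $0\le v(t,1)\le c_*$, and the uniform bound on $G(v)$ being exactly what the maximum principle needs.
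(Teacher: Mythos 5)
Your argument is correct and is essentially the paper's proof in different coordinates: the function $\psi=v_y/h$ you introduce equals $-c_z\bigl(t,yh(t)\bigr)$, so your equation for $\psi$, its boundary data, and the comparison with the constants $\pm M$ (with $M=\max\{\kappa_Lc_*/(L\kappa),\|v_0'\|_\infty/h_0\}$) are exactly the paper's comparison-principle argument for $d=c_z$ on the moving domain $0<z<h(t)$, transported to the fixed cylinder. The regularity caveat you flag is present to the same degree in the paper's own (terse) proof, so there is no gap relative to it.
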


\begin{proof}
First observe from~\eqref{EBBx} that $d=c_z$ satisfies the equation
$$\begin{cases}
  d_t=\kappa d_{zz}-r'(c)d&\text{in }\bigl( 0,h(t)\bigr)\text{ for } t>0,\\
  d(t,0)=0,\quad d\bigl( t,h(t)\bigr)=\frac{\kappa _L}{\kappa L}(c_*-c)&
  \text{for }t>0,\\d(0,\cdot)=(c_0)_z&\text{in }(0,h_0).\end{cases}
$$
Since $r'(c)\ge 0$, it follows from the comparison principle that
$$
\| d(t,\cdot)\|_\infty\leq M:=\max\left\{ \| (c_0)_z\|_\infty, \frac{c_*\kappa}{\kappa
  L}\right\}, \quad t\geq 0.
$$
From this and~\eqref{trafo} we infer \eqref{bound}. Finally, if
$v_0$ is non-increasing, then $(c_0)_z\ge 0$ so that $d$ is
nonnegative on the ``parabolic boundary'' thanks to the fact that
$0\leq c\leq c_*$ and the boundary condition satisfied at
$z=h(t)$. The weak maximum principle thus ensures that $d(t,\cdot)\geq
0$ and the assertion follows from~\eqref{trafo}.
\end{proof}

Observe that~\eqref{bound} implies that, in Proposition \ref{P5}, we
have  \mbox{$v(t,\cdot)\to 0$ in $\operatorname{W}^1_p(0,1)$} as
$t\to\infty$ for any $p\in (1,\infty)$ whenever $v_0\in
\operatorname{W}_\infty^1\bigl( (0,1)\bigr)$.


\section{Quasi-Steady Approximation}
In this section we give a more detailed account of the dynamics 
focusing on a quasi-steady approximation of~\eqref{EBBx}. 
If diffusion inside the biofilm is also assumed to take place
at a much faster time scale than biofilm growth (as is observed
experimentally), it can be assumed
that $\varepsilon =0$ in \eqref{EBBx} and the system
becomes
\begin{subequations}
\label{BBx}
\begin{align}
\kappa c_{zz}&= r(c),  &&0<z<h(t),\quad t>0,\label{BBx2}\\
c\bigl(t,h(t)\bigr)+\frac{L\kappa}{\kappa_L}c_z\bigl(t,h(t)\bigr)&=c_*,\quad
c_z(t,0)=0, &&t>0,\label{BBx4}\\
h_t&=\int_0^h g\big(r(c(t,z))\big)\,\rd z,&& t>0,\label{BBx6}\\
h(0)&=h_0.\label{BBx7}
\end{align}
 \end{subequations}
For this simplified model, we are able to provide a fairly complete
picture of the qualitative aspects of the dynamics under suitable
assumptions on the biofilm growth rate $g$ and the substrate
consumption rate $r$.

As in the previous section it is convenient to introduce dimensionless
variables via
$$
y=\frac{z}{h(t)}\quad \text{ and }\quad u(t,y)=c\bigl(t,yh(t)\bigr),
$$
so that the moving boundary problem \eqref{BBx} is equivalent to the
following initial (fixed) boundary value problem
\begin{subequations}
\label{BBB}
\begin{align}
\frac{\kappa}{h^2(t)} u_{yy}&= r(u),  &&0<y<1,\quad t>0,\label{BBB2}\\
u_y(t,0)&=0, &&t>0,\label{BBB4}\\
u(t,1)+\frac{L\kappa}{\kappa_L h(t)} u_y(t,1)&=c_*, &&t>0,\label{BBB3}\\
h_t(t)&=h(t)\int_0^1 g\big(r(u(t,y))\big)\,\rd y,&& t>0,\label{BBB6}\\
h(0)&=h_0.\label{BBB7}
\end{align}
\end{subequations}
In order to analyze the dynamics we establish useful properties of the solution $u[h]$ of the sub-problem
\eqref{BBB2}-\eqref{BBB3}  as a function of $h$. Existence of a smooth solution
$u[h]$ is obtained by means of a fixed point argument. Then the system
is reduced to an ODE for~$h$ and solved by standard arguments. This
allows us to characterize the behavior of the ``extreme'' solutions
observed when $h=0$ or $h=\infty$. We also establish the
existence of non-trivial equilibria for general nonlinearities $g$ and
$r$ (on the already mentioned minimal regularity and physical
assumptions). In the special case of affine $g$, we can ensure the
uniqueness of the non-trivial equilibrium and prove that the solution
of the initial value problem converges to it.

\subsection{Well-Posedness}

We first establish the existence of a unique solution to~\eqref{BBB}.

\begin{thm}\label{T1}
Assume \eqref{GeneralAssumptions} with $r'\ge 0$. Then, for each $h_0>0$ there is a
unique global solution
$$
(h,u)\in {\rm C}^1\big([0,\infty),(0,\infty)\times
{\rm C}^2([0,1])\big)
$$
to \eqref{BBB}. Moreover, it holds that $0\le u(t,y)\le c_*$ for $t\ge
0$ and $y\in[0,1]$.
\end{thm}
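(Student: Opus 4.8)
The plan is to decouple the elliptic sub-problem \eqref{BBB2}--\eqref{BBB3} from the ODE \eqref{BBB6}--\eqref{BBB7} and treat them in sequence. First I would fix $h>0$ and study the boundary value problem
\begin{equation*}
\frac{\kappa}{h^2}u'' = r(u) \ \text{ on } (0,1),\qquad u'(0)=0,\qquad u(1)+\frac{L\kappa}{\kappa_L h}u'(1)=c_*,
\end{equation*}
showing it has a unique solution $u[h]\in {\rm C}^2([0,1])$ with $0\le u[h]\le c_*$. Existence I would get by a fixed point argument: rewrite the problem as $u=T_h(u)$ where $T_h$ solves the linear Robin problem $\frac{\kappa}{h^2}w''=r(v)$, $w'(0)=0$, $w(1)+\frac{L\kappa}{\kappa_L h}w'(1)=c_*$ for given $v$, and check that $T_h$ maps the order interval $\{0\le v\le c_*\}$ into itself (using $r(0)=0$, $r(c_*)>0$, $r'\ge 0$ together with the maximum principle for the Robin operator) and is compact on ${\rm C}([0,1])$, so Schauder applies; alternatively one can invoke the method of sub/supersolutions with $\underline u=0$ and $\overline u=c_*$. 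Uniqueness follows from $r'\ge 0$: the difference of two solutions satisfies a homogeneous linear problem $\frac{\kappa}{h^2}\phi''=r'(\xi)\phi$ with the homogeneous Robin and Neumann conditions, and testing with $\phi$ and integrating by parts kills it. I would also record smooth dependence $h\mapsto u[h]$, say $u\in {\rm C}^1\bigl((0,\infty),{\rm C}^2([0,1])\bigr)$, by the implicit function theorem applied to the map $(h,u)\mapsto\bigl(\tfrac{\kappa}{h^2}u''-r(u),\,u'(0),\,u(1)+\tfrac{L\kappa}{\kappa_L h}u'(1)-c_*\bigr)$; the linearization at a solution is precisely the invertible operator used for uniqueness.

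Next I would substitute $u=u[h]$ into \eqref{BBB6} to obtain the scalar ODE
\begin{equation*}
h'(t)=h(t)\,\mathcal G(h(t)),\qquad \mathcal G(h):=\int_0^1 g\bigl(r(u[h](y))\bigr)\,\rd y,\qquad h(0)=h_0,
\end{equation*}
where $\mathcal G$ is locally Lipschitz on $(0,\infty)$ thanks to the ${\rm C}^1$-dependence just established together with the Lipschitz continuity of $g$. Picard--Lindel\"of gives a unique maximal solution $h\in {\rm C}^1\bigl([0,t^+),(0,\infty)\bigr)$; since the right-hand side is $h$ times a bounded quantity (using $0\le u[h]\le c_*$, hence $|g(r(u[h]))|\le\|g\|_{\infty,[0,r(c_*)]}$), Gr\"onwall yields $e^{-Mt}h_0\le h(t)\le e^{Mt}h_0$ on $[0,t^+)$ with $M=\|g\|_{\infty,[0,r(c_*)]}$. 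This bound keeps $h$ away from both $0$ and $\infty$ on every finite interval, so $t^+=\infty$ and the solution is global. Finally, setting $u(t,\cdot):=u[h(t)]$ gives the pair $(h,u)$ with the stated regularity $(h,u)\in{\rm C}^1\bigl([0,\infty),(0,\infty)\times{\rm C}^2([0,1])\bigr)$, and the bound $0\le u(t,y)\le c_*$ is inherited from the pointwise bound on $u[h]$. Uniqueness of $(h,u)$ for \eqref{BBB} is immediate: any solution has $u(t,\cdot)=u[h(t)]$ by uniqueness at the elliptic level, so $h$ solves the ODE and is therefore unique.

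I expect the main obstacle to be the elliptic sub-problem, specifically verifying the order-interval invariance $0\le u[h]\le c_*$ and, relatedly, setting up the maximum/comparison principle for the Robin condition at $y=1$ (whose coefficient $\tfrac{L\kappa}{\kappa_L h}$ degenerates as $h\to\infty$, though this is harmless for fixed $h$). One has to be careful that the boundary condition is of the ``right sign'' for the maximum principle — the outward normal derivative at $y=1$ enters with a positive coefficient — so that a nonnegative supersolution cannot develop a negative interior minimum, and symmetrically for the upper barrier $c_*$ using $r(c_*)>0$. The ${\rm C}^1$-dependence on $h$ is then a routine implicit-function-theorem argument once the uniqueness linearization is in hand, and the ODE step is entirely standard.
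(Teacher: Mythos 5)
Your proposal is correct and follows essentially the same route as the paper: decouple the elliptic sub-problem, establish existence (via a compactness fixed-point argument), uniqueness (by testing the difference and using $r'\ge 0$), and ${\rm C}^1$-dependence on $h$ (via the implicit function theorem), then reduce \eqref{BBB} to a scalar ODE for $h$ whose right-hand side is bounded by $Mh$, giving global existence. The only cosmetic difference is that the paper truncates $r$ and applies Sch\"afer's fixed point theorem, deriving $0\le u\le c_*$ afterwards from convexity, whereas you propose invariance of the order interval or sub/supersolutions up front; both work.
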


The proof of Theorem \ref{T1} is a consequence of the following
proposition in which we study the
sub-problem~\eqref{BBB2}-\eqref{BBB3} for fixed $h$.

\begin{prop}\label{P1}
Assume \eqref{GeneralAssumptions} with $r'\ge 0$. For each $h>0$ there exists a
unique solution $$u=u[h]\in {\rm C}^2 \bigl( [0,1]\bigr)$$ of the
boundary value problem
\begin{subequations}\label{BBBB}
\begin{align}
\frac{\kappa}{h^2} u_{yy}&= r(u),\quad  0<y<1,\label{BBBB1}\\
u_y(0)&=0,\quad u(1)+\frac{L\kappa}{\kappa_L h}u_y(1)=c_*.\label{BBBB2}
\end{align}
\end{subequations}
It holds that
\begin{align}\label{C}
  u(y)=c_*-\frac{Lh}{\kappa_L}\int_0^1
  r \bigl( u(\eta)\bigr)\,\rd \eta-\frac{h^2}{\kappa}
  \int_y^1\int_0^\eta r \bigl( u(\rho)\bigr)\,\rd\rho \,\rd \eta,\quad y\in [0,1].
\end{align}
Moreover, one also has that
\begin{subequations}\label{ccc}
\begin{align}
&0< u(0)\le u(y)\le u(1)< c_*,\quad  y\in [0,1],\label{ccc1}\\
&0\le u_y(y)\le u_y(1)=\frac{\kappa_L}{L\kappa}h \big(c_*-u(1)\big),\quad 
y\in [0,1],\label{ccc2}\\
&0\le u_{yy}(y)\le \frac{r(c_*)}{\kappa}h^2,\quad  y\in [0,1], \label{ccc3}
\end{align}
\end{subequations}
and
$$
\Bigl[ h\mapsto u[h]\Bigr]\in {\rm C}^1\big((0,\infty),
{\rm C}^2([0,1])\big).
$$
\end{prop}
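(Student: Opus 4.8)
The plan is to recast \eqref{BBBB} as a fixed-point equation, solve it by Schauder's fixed point theorem, extract the a priori bounds and the representation \eqref{C}, and then obtain the $h$-dependence from the implicit function theorem. For $u\in{\rm C}^2([0,1])$, integrating \eqref{BBBB1} from $0$ to $y$ and using $u_y(0)=0$ yields $u_y(y)=\tfrac{h^2}{\kappa}\int_0^y r(u(\rho))\,\rd\rho$; integrating once more from $y$ to $1$ and inserting the value $u(1)=c_*-\tfrac{Lh}{\kappa_L}\int_0^1 r(u(\eta))\,\rd\eta$ dictated by the Robin condition in \eqref{BBBB2}, one sees that \eqref{BBBB} is equivalent to $u=Tu$, where $Tu$ is the right-hand side of \eqref{C}; conversely, any $u\in{\rm C}([0,1])$ with $u=Tu$ lies in ${\rm C}^2([0,1])$ and solves \eqref{BBBB}.

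\emph{Existence and bounds.} I would replace $r$ by $\tilde r(s):=r\bigl(\min\{\max\{s,0\},c_*\}\bigr)$, which is nonnegative, bounded by $R:=\|r\|_{\infty,[0,c_*]}$ and globally Lipschitz, and let $\tilde T$ be the associated operator. Then $\tilde T$ maps the closed ball of radius $\rho:=c_*+\bigl(\tfrac{Lh}{\kappa_L}+\tfrac{h^2}{2\kappa}\bigr)R$ of ${\rm C}([0,1])$ continuously into itself, and its image is bounded in ${\rm C}^2([0,1])$, hence precompact in ${\rm C}([0,1])$; Schauder's theorem gives a fixed point $u=u[h]\in{\rm C}^2([0,1])$. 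By construction $u_y(0)=0$, the Robin condition holds, and $\tfrac{\kappa}{h^2}u_{yy}=\tilde r(u)\ge0$, so $u$ is convex and, since $u_y(0)=0$, nondecreasing; thus $u(0)=\min u$, $u(1)=\max u$, and the Robin condition together with $u_y(1)\ge0$ gives $u(1)\le c_*$. If $u(0)<0$, then as long as $u\le0$ one has $\tilde r(u)=r(0)=0$, hence $u_{yy}=0$, and with $u_y(0)=0$ this forces $u\equiv u(0)<0$ on all of $[0,1]$, contradicting the Robin condition; therefore $0\le u(0)$, so $0\le u\le c_*$, $\tilde r(u)=r(u)$, and $u$ solves \eqref{BBBB} and satisfies \eqref{C}. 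If moreover $u(0)=0$, then $u(0)=u_y(0)=0$ and uniqueness for the initial value problem $w''=\tfrac{h^2}{\kappa}r(w)$, $w(0)=w'(0)=0$ (valid since $r\in{\rm C}^1$), forces $u\equiv0$, again contradicting the Robin condition. Hence $u(0)>0$, whence $r(u)>0$ on $[0,1]$, $u_y(1)>0$ and $u(1)<c_*$, which is \eqref{ccc1}; the formula for $u_y$ above together with the Robin condition gives \eqref{ccc2}, and \eqref{ccc3} follows from $u_{yy}=\tfrac{h^2}{\kappa}r(u)$, $r'\ge0$ and $0\le u\le c_*$.

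\emph{Uniqueness and $h$-dependence.} If $u_1,u_2$ solve \eqref{BBBB}, the difference $w:=u_1-u_2$ satisfies $\tfrac{\kappa}{h^2}w_{yy}=r(u_1)-r(u_2)$, $w_y(0)=0$ and $w_y(1)=-\tfrac{\kappa_L h}{L\kappa}w(1)$; multiplying by $w$, integrating over $(0,1)$ and integrating by parts,
\[
\frac{\kappa}{h^2}\Bigl(-\int_0^1|w_y|^2\,\rd y-\frac{\kappa_L h}{L\kappa}|w(1)|^2\Bigr)=\int_0^1\bigl(r(u_1)-r(u_2)\bigr)(u_1-u_2)\,\rd y\ge0
\]
by monotonicity of $r$, so $w_y\equiv0$, $w(1)=0$, i.e.\ $w\equiv0$. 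For the regularity in $h$, I would apply the implicit function theorem to $\Phi\colon(0,\infty)\times{\rm C}^2([0,1])\to{\rm C}([0,1])\times\R^2$, $\Phi(h,u):=\bigl(\tfrac{\kappa}{h^2}u_{yy}-r(u),\,u_y(0),\,u(1)+\tfrac{L\kappa}{\kappa_L h}u_y(1)-c_*\bigr)$: since $r\in{\rm C}^1$, $\Phi$ is continuously differentiable, $\Phi(h,u[h])=0$, and $D_u\Phi(h,u[h])v=\bigl(\tfrac{\kappa}{h^2}v_{yy}-r'(u[h])v,\,v_y(0),\,v(1)+\tfrac{L\kappa}{\kappa_L h}v_y(1)\bigr)$. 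The principal part $v\mapsto\bigl(\tfrac{\kappa}{h^2}v_{yy},\,v_y(0),\,v(1)+\tfrac{L\kappa}{\kappa_L h}v_y(1)\bigr)$ is an isomorphism onto ${\rm C}([0,1])\times\R^2$, the map $v\mapsto r'(u[h])v\colon{\rm C}^2([0,1])\to{\rm C}([0,1])$ is compact (the embedding ${\rm C}^2([0,1])\hookrightarrow{\rm C}([0,1])$ being compact), and the kernel of $D_u\Phi(h,u[h])$ is trivial by the energy identity above, so $D_u\Phi(h,u[h])$ is an isomorphism; the implicit function theorem then yields a ${\rm C}^1$ local branch which, by uniqueness, equals $h\mapsto u[h]$, so $[h\mapsto u[h]]\in{\rm C}^1\bigl((0,\infty),{\rm C}^2([0,1])\bigr)$.

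The main obstacle is the a priori bound block inside the existence step — in particular the strict positivity $u(0)>0$, which is what ensures that the truncation $\tilde r$ is inactive and hence that the Schauder fixed point genuinely solves \eqref{BBBB}; the convexity of $u$ and the monotonicity/Robin structure do the work, but these arguments have to be arranged in the right order (first $0\le u\le c_*$, then $u(0)>0$). Verifying that the linearized operator in the last step is an isomorphism is routine once the energy identity of the uniqueness step is in hand.
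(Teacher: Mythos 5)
Your proof is correct and follows essentially the same route as the paper: truncate $r$, obtain a fixed point of the integral operator by a compactness fixed-point theorem (you use Schauder on a ball where the paper uses Sch\"afer, an immaterial difference since the truncated nonlinearity is bounded), derive \eqref{ccc} from convexity and the Robin condition, prove uniqueness by the same energy identity, and get the ${\rm C}^1$ dependence on $h$ from the implicit function theorem via a Fredholm-plus-injectivity argument. The only cosmetic deviation is that you linearize the differential formulation $\Phi(h,u)$ with values in ${\rm C}([0,1])\times\R^2$ rather than the integral formulation $u-F(u)$ in ${\rm C}^2([0,1])$; both reduce to the same kernel computation.
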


\begin{proof}
{\bf Step 1.} We define
$$
F(u)(y):=c_*-\frac{Lh}{\kappa_L}\int_0^1
  \tilde r \bigl( u(\eta)\bigr)\,\rd \eta-\frac{h^2}{\kappa}
  \int_y^1\int_0^\eta \tilde r \bigl( u(\rho)\bigr)\,\rd\rho \,\rd \eta,\quad y\in [0,1],
$$
for $u\in {\rm C}([0,1])$, where
$$
\tilde r (s):=
\begin{cases}
 r(s),& s\in[0,c_*],\\ 0,&s< 0,\\ r(c_*),& s\geq  c_*,
\end{cases}
$$
which will prove no restriction once we establish \eqref{ccc1}. The
function $F$ clearly satisfies
$$
F: {\rm C}([0,1])\rightarrow {\rm C}([0,1])
$$
by definition using the fact that the function $\tilde r$ is
continuous. Thanks to
$$
\partial_y F(u)(y)=\frac{h^2}{\kappa} \int_0^y \tilde r\bigl(
u(\rho)\bigr)\,\rd \rho,\quad y\in [0,1],
$$
for  $u\in {\rm C}([0,1])$ and since $\tilde r$ is bounded,
the Arzel\`a-Ascoli theorem ensures that the mapping $F:
{\rm C}([0,1])\rightarrow {\rm C}([0,1])$ is also
compact. The boundedness of $\tilde r$ also implies that the set
$$
\left\{u\in C([0,1])\, \big |\,  \text{there is } \lambda\in [0,1] \text { with }
u=\lambda F(u)\right\} 
$$
is bounded. Consequently, Sch\"afer's fixed point \cite[Theorem~11.3]{GT} theorem entails that
there is $u\in {\rm C}([0,1])$ with $u=F(u)$. In fact, since
$$
u_y(y)=\frac{h^2}{\kappa} \int_0^y \tilde r\bigl(u(\eta)\bigr)\,\rd
\eta,\quad y\in [0,1], 
$$
we also have that $u\in {\rm C}^2([0,1])$ and satisfies
\begin{subequations}\label{BBBB4}
\begin{align}\label{BBBB4a}
\frac{\kappa}{h^2} u_{yy}(y)&= \tilde r\bigl(u(y)\bigr),\quad  y\in [0,1],
\end{align}
as well as
\begin{equation}\label{BBBB4b}
u_y(0)=0, \qquad \frac{L\kappa}{\kappa_L h}u_y(1)=\frac{L h}{\kappa_L}
\int_0^1 \tilde r\bigl(u(\eta)\bigr)\,\rd \eta=c_*-u(1).
\end{equation}
\end{subequations}
Next we consider the additional estimates for the solution. Starting
with $u\geq 0$, first assume by contradiction that $u\le 0$
everywhere. Then $\tilde r(u)\equiv 0$ and \eqref{BBBB4a} implies that
$u_y\equiv 0$ since $u_y(0)=0$. However, according to \eqref{BBBB4b} it
must hold $u_y(1)>0$ since $u(1)\le 0$ by premise. Thus we have a
contradiction. It still cannot be excluded that $u(y_0)<0$ for some
$y_0\in [0,1]$, so, towards a contradiction, assume that this is the
case. Then there are $0\le y_1<y_2\le 1$ such that  $u(y)<0$ for
$y\in (y_1,y_2)$ and $u(y_1)=0$ or $u(y_2)=0$. Using again that
$s\tilde r(s)\ge 0$, this is impossible since \eqref{BBBB4a} implies that
$u$ is concave on $(y_1,y_2)$ and, consequently, $u\ge 0$ everywhere
and \eqref{BBBB4a} shows that that $u$ is convex. Notice that
$u\equiv0$ is excluded by \eqref{BBBB4b} given that $\tilde
r(0)=0$. We conclude that $u\geq 0$.

Next,  we note that $u(0)>0$ since otherwise $u\equiv 0$ as the
solution to~\eqref{BBBB1} with $u(0)=u_y(0)=0$. Moreover, if
$u(1)=c_*$, then $u_y(1)=0$ and the convexity of $u$ would imply that
$u(y)>c_*$ for $y<1$ close to 1, which is impossible. Hence,
exploiting that $u_y(0)=0$ and convexity we see that
$$
0< u(0)\le u(y)\le u(1)< c_*,\quad y\in (0,1),
$$
making use again of \eqref{BBBB4b} for the upper bound $c_*$. Thus
\eqref{ccc1} is established. It follows, in particular, that $\tilde r
\bigl( u(y)\bigr) =r \bigl( u(y)\bigr)$ for $y\in [0,1]$ and
\eqref{BBBB4} ensures that $u$ solves the unmodified problem
\eqref{BBBB}.

Thanks to the convexity of $u$ and the monotonicity of $r$ we also
derive \eqref{ccc2} and~\eqref{ccc3}.
 
Let us now turn to uniqueness. Take two solutions $u_1$ and $u_2$ of
\eqref{BBBB} and set $w:=u_1-u_2$. The latter satisfies
$$
\frac{\kappa}{h^2} w_{yy}= r(u_1)-r(u_2), \quad y\in [0,1]
$$
and
$$
w_y(0)=0,\quad w(1)+\frac{L\kappa}{\kappa_Lh}w_y(1)=0. 
$$
Multiplying the first equation with $w$ and integrating by parts
gives
\begin{align*}
0&\le\int_0^1\Big[r\bigl(u_1(y)\bigr)-r\bigl(u_2(y)\bigr)\Big]\Big[u_1(y)-u_2(y)\Big]\,\rd y\\
&=-\frac{\kappa}{h^2} \int_0^1 w_y^2(y)\,\rd y+ \frac{\kappa}{h^2} w(y)w_y(y)\Big\vert_{y=0}^{y=1}\\
&= -\frac{\kappa}{h^2} \int_0^1 w_y^2(y)\,\rd y -  \frac{L\kappa^2}{\kappa_L h^3} w_y^2(1)
\end{align*}
using the boundary conditions and the monotonicity of $r$. It follows
that $w\equiv 0$ on $[0,1]$.\\[.1cm]
{\bf Step 2.} It remains to show the continuous dependence of the
solution on $h>0$. We write~$u[h]$ for the unique solution just found
in {\bf Step 1} and set
\begin{equation*}
F(h,v)(y):=v(y)-c_*+\frac{Lh}{\kappa_1}\int_0^1r\bigl(v(\eta)\bigr)\,\rd \eta
+\frac{h^2}{\kappa} \int_y^1 \int _0^\eta r\bigl(v(\rho)\bigr)\,\rd
\rho\, \rd\eta
\end{equation*}
for $h>0$, $y\in [0,1]$, and $v\in {\rm C}^2([0,1])$. Then
$$
F:(0,\infty)\times {\rm C}^2([0,1])\rightarrow
{\rm C}^2([0,1])
$$ 
satisfies $F\bigl(h,u[h]\bigr)=0$ for every
$h>0$. Moreover, its Fr\'echet derivative is given by $D_v
F\bigl(h,u[h]\bigr)=1+K$, where
\begin{align*}
(K\varphi) (y):&=\frac{Lh}{\kappa_L}\int_0^1 r'\bigl( u[h](\eta)
\bigr)\varphi(\eta)\,\rd \eta+\frac{h^2}{\kappa} \int_y^1 \int_0^\eta
r'\bigl( u[h](\rho)\bigr)\varphi(\rho)\,\rd \rho \rd \eta
\end{align*}
for $h>0$, $y\in [0,1]$, and $\varphi\in
{\rm C}^2([0,1])$. Since
$$
\partial_y (K\varphi) (y)=-\frac{h^2}{\kappa} \int_0^y  r'\bigl(
u[h](\rho)\bigr)\varphi(\rho)\,\rd \rho\text{ and } 
\partial_y^2 (K\varphi) (y)=-\frac{h^2}{\kappa}  r'\bigl(
u[h](y)\bigr)\varphi(y),
$$
the Arzel\`a-Ascoli theorem ensures, as in {\bf Step 1}, that $K\in
\mathcal{L}({\rm C}^2([0,1]))$ is a compact operator, hence
$D_v F(h,u[h]) =1+ K$ is a Fredholm operator. If $\varphi\in \mathrm{ker}
\bigl(D_v F\bigl(h,u[h]\bigr)\bigr)$, then $\varphi\in
{\rm C}^2([0,1])$ satisfies
\begin{align*}
\frac{\kappa}{h^2}\varphi_{yy}(y)&= r'\bigl(u[h](y)\bigr)\varphi(y), \quad y\in [0,1],\\
\varphi_y(0)&=0,\quad \varphi(1)+\frac{L\kappa}{\kappa_L h}\varphi_y(1)=0.
\end{align*}
Since $r'\bigl( u[h](y)\bigr )\ge 0$ for $y\in [0,1]$, it follows as
in the uniqueness proof that $\varphi\equiv 0$ on~$[0,1]$. This means
that $D_v F\bigl( h,u[h]\bigr)$ is an automorphism on
${\rm C}^2([0,1])$ for each $h>0$. Since $F\bigl(
h,u[h]\bigr)=0$ for $h>0$, the implicit function theorem finally
ensures that indeed 
$$
\big[ h\mapsto u[h] \big]\in {\rm C}^1\bigl((0,\infty),
{\rm C}^2([0,1])\big),
$$
and the proof the proposition is complete.
\end{proof}

\begin{proof}[Proof of Theorem~\ref{T1}]
We set
\begin{equation}
\label{f}
f(h):=h\int_0^1 g\bigl(r\bigl(u[h](y)\bigr)\bigr)\,\rd y,\quad h>0,
\end{equation}
where $u[h]$ denotes the unique solution  of \eqref{BBBB}. From
Proposition~\ref{P1} we infer that \mbox{$f\in {\rm
    C}^1\big((0,\infty),\R)$} with $\vert f(h)\vert\le Mh$ for
$M=\|g\|_{\infty,[0,c_*]}$. Therefore, for every given $h_0>0$ there
is a unique global solution $h\in {\rm C}^1\bigl([0,\infty)\bigr)$
of the ODE
$$ 
h'=f(h),\quad  t>0,\qquad h(0)=h_0,
$$
with $h(t)\ge \exp(-M t)h_0$ for $t\ge 0$. Proposition \ref{P1} then
ensures that
$$
\Bigl[ t\mapsto u\bigl[h(t)\bigr]\Bigr]\in
{\rm C}^1\big([0,\infty), {\rm C}^2([0,1])\big).
$$
Consequently,
$$
(h,u)\in {\rm C}^1\big([0,\infty),(0,\infty)\times
C^2([0,1])\big)
$$
is the unique solution of~\eqref{BBB}. Moreover, $0\le u(t,y)\le c_*$
for $t\ge 0$ and $y\in [0,1]$. This completes the proof of Theorem
\ref{T1}.
\end{proof}

\subsection{Large-Time Behavior}

As already pointed out we can give a fairly complete picture of the dynamics for~\eqref{BBx}. We begin by investigating  equilibria.

\subsubsection{Equilibria}

Equilibria $(c,h)$ of \eqref{BBx} satisfy
\begin{subequations}\label{BBex}
\begin{align}
\kappa c_{zz}&= r(c),\quad 0<z<h,\label{BBex2}\\
c_z(0)&=0,\quad c(h)+\frac{L\kappa}{\kappa_L}c_z(h)=c_*, \label{BBex4}\\
&\int_0^h g\big(r(c(z))\big)\,\rd z=0. \label{BBex6}
\end{align}
 \end{subequations}
We establish the existence of equilibria for a general function
$g$. As it will be instrumental in the proof, we start with the
dependence of the solution $u=u[h]$ of \eqref{BBBB} as $h\to 0$ and as
$h\to \infty$.  We first deal with the limit as $h\to 0$.

\begin{lem}\label{h=0}
Assume~\eqref{GeneralAssumptions} with $r'\ge 0$. Given $h>0$, let $u[h]\in 
{\rm C}^2([0,1])$ be the unique solution to~\eqref{BBBB}
provided by Proposition~\ref{P1}. Then
$$
u[h]\to c_*\  \text{ in }\ {\rm C}^2([0,1])\ \text{ as }\ h\to 0.
$$
\end{lem}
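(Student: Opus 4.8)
The plan is to use the integral representation \eqref{C} from Proposition~\ref{P1} together with the \emph{a priori} bounds \eqref{ccc} to pass to the limit $h\to 0$. First I would observe that since $0\le u[h](y)\le c_*$ on $[0,1]$ and $r$ is continuous, there is a uniform bound $|r(u[h](y))|\le \|r\|_{\infty,[0,c_*]}=:R$ for all $h>0$ and $y\in[0,1]$. Feeding this into \eqref{C} immediately gives
\begin{equation*}
\|u[h]-c_*\|_\infty \le \frac{LhR}{\kappa_L} + \frac{h^2 R}{2\kappa} \longrightarrow 0 \quad\text{as } h\to 0,
\end{equation*}
which establishes $u[h]\to c_*$ in ${\rm C}([0,1])$.

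Next I would upgrade this to convergence in ${\rm C}^1$ and then ${\rm C}^2$ by differentiating \eqref{C}. From Proposition~\ref{P1} (or directly from the formula for $u_y$ obtained in its proof) we have $u_y[h](y)=\frac{h^2}{\kappa}\int_0^y r(u[h](\rho))\,\rd\rho$, so $\|u_y[h]\|_\infty\le \frac{h^2 R}{\kappa}\to 0$; alternatively \eqref{ccc2} gives $0\le u_y[h](y)\le \frac{\kappa_L}{L\kappa}h(c_*-u[h](1))\le \frac{\kappa_L c_*}{L\kappa}h\to 0$. For the second derivative, \eqref{BBBB1} gives $u_{yy}[h](y)=\frac{h^2}{\kappa}r(u[h](y))$, whence $\|u_{yy}[h]\|_\infty\le \frac{h^2 R}{\kappa}\to 0$ (this is also \eqref{ccc3}). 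Since the constant function $c_*$ has vanishing first and second derivatives, combining the three estimates yields $\|u[h]-c_*\|_{{\rm C}^2([0,1])}\to 0$ as $h\to 0$.

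There is essentially no obstacle here: the boundary condition at $y=1$ only contributes the $O(h)$ term in the representation, and the diffusive term contributes $O(h^2)$, so the convergence is in fact quantitative. The only point requiring a word of care is that the representation \eqref{C} and the bounds \eqref{ccc} are stated for the unmodified nonlinearity $r$ once \eqref{ccc1} is known to hold — which Proposition~\ref{P1} guarantees — so one is free to use $|r(u[h])|\le R$ throughout rather than the truncation $\tilde r$. I would therefore present the three displayed estimates above in sequence and conclude.
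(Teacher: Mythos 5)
Your proof is correct and follows essentially the same route as the paper's: both rest on the uniform bound $0\le u[h]\le c_*$ from \eqref{ccc1} and the integral form of the equation to show that $u[h]-c_*$, $u_y[h]$ and $u_{yy}[h]$ are $O(h)$, $O(h^2)$ and $O(h^2)$ respectively. The only cosmetic difference is that you read the sup-norm estimate directly off the representation \eqref{C}, whereas the paper first integrates \eqref{BBBB1} to show $u_y[h]/h\to 0$ and $u[h](1)\to c_*$ and then concludes; your version is, if anything, slightly more streamlined and quantitative.
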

\begin{proof}
We know that there is a unique solution $u[h]$ of
\eqref{BBBB} for every $h>0$ that satisfies
$u[h](y)\in[0,c_*]$ for $y\in [0,1]$. Integrating \eqref{BBBB1} and
using the boundary conditions \eqref{BBBB2} we obtain
\begin{align}\label{t1}
  h^2\int_0^y r\big(u[h](\eta)\big)\,\rd \eta=
  \kappa u_y[h](y),\quad y\in[0,1],
\end{align}
and thus, since $r\big(u[h](y)\big)\leq r(c_*)$, that
$$
0\leq \frac{u_y[h](\cdot)}{h}\leq\frac{r(c_*)}{\kappa}h\to 0
\ \text{ in }\ [0,1] \ \text{ as }\ h\to 0,
$$
so that \eqref{ccc2} implies that
$$
0\leq
c_*-u[h](1)=\frac{L\kappa}{\kappa_L}\frac{u_y[h](1)}{h}\to 0\
\text{ as }\ h\to 0.
$$
This entails that $u[0](1)=c_*$ and we deduce that $u[h]\to c_*$
in~${\rm C}^1([0,1])$, and hence in~${\rm C}^2([0,1])$ thanks to
\eqref{BBBB1}.
\end{proof}
Next we consider the limit as $h\to\infty$ and introduce the new
parameter $\varepsilon=h^{-2}$ and the new independent variable
$$
x=\frac{1-y}{\sqrt{\varepsilon}},\qquad
v(x)=u\left(1-\sqrt{\varepsilon}x\right) 
$$
so that \eqref{BBBB}
becomes
\begin{align*}
  &\kappa  v_{xx}=r(v), \quad
    x\in\bigl(0,1/\sqrt{\varepsilon}\bigr),\\ 
  &v_x(1/\sqrt{\varepsilon})=0,\qquad
  v(0)-\frac{L\kappa}{\kappa_L}\varepsilon v_x(0)=c_*.
\end{align*}
Clearly, it still holds that $v\in[0,c_*]$.

\begin{lem}\label{h=infty}
Assume~\eqref{GeneralAssumptions} with $r'(s)>0$ for $s\in [0,c_*]$.
Then  $u[h]\to 0$  uniformly  
in any compact subinterval of $[0,1)$ as $h\to\infty$. 
\end{lem}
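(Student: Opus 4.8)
The plan is to work with the rescaled formulation $\kappa v_{xx}=r(v)$ on $(0,1/\sqrt\varepsilon)$ with $v_x(1/\sqrt\varepsilon)=0$ and $v(0)-\frac{L\kappa}{\kappa_L}\varepsilon v_x(0)=c_*$, where $\varepsilon=h^{-2}\to 0$, and to show that $v$ becomes exponentially small away from $x=0$. Since $r'>0$ on $[0,c_*]$ and $r(0)=0$, we have $r(s)\ge \mu s$ for $s\in[0,c_*]$ with $\mu:=\min_{[0,c_*]}r'>0$. From Proposition~\ref{P1} we know $v$ is convex and non-increasing in $y$, hence non-decreasing in $x$, with $0<v(x)\le v(0)<c_*$. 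The first step is to get a usable bound on $v(0)$, i.e. on $u[h](1)$: from \eqref{t1} and $r(u)\le r(c_*)$ we get $0\le u_y[h](1)\le \frac{r(c_*)}{\kappa}h^2$, which combined with the Robin condition \eqref{ccc2} gives only $u[h](1)\le c_*$, so this alone does not force decay. Instead I would derive a genuine comparison bound.

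The key step is a barrier/comparison argument on the half-line. Extend the problem by comparing $v$ with the solution $\phi$ of the linear problem $\kappa\phi_{xx}=\mu\phi$ on $(0,\infty)$ with $\phi(0)=c_*$, namely $\phi(x)=c_*e^{-\sqrt{\mu/\kappa}\,x}$. Because $r(v)\ge\mu v\ge 0$ and $v\le c_*$, one checks that $\kappa(\phi-v)_{xx}=\mu\phi-r(v)\le\mu\phi-\mu v=\mu(\phi-v)$ where $v\le\phi$, and $\phi-v\ge 0$ at $x=0$ (using $v(0)\le c_*=\phi(0)$) while at $x=1/\sqrt\varepsilon$ we have $\phi>0=v_x$... more carefully: set $w=v-\phi$; then $\kappa w_{xx}=r(v)-\mu\phi\le\mu v-\mu\phi=\mu w$ on the whole interval, $w(0)=v(0)-c_*\le 0$, and $w_x(1/\sqrt\varepsilon)=-\phi_x(1/\sqrt\varepsilon)>0$. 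The differential inequality $\kappa w_{xx}-\mu w\le 0$ with $w(0)\le 0$ rules out an interior positive maximum of $w$ (at such a point $w_{xx}\le 0$ but $w>0$ forces $w_{xx}>0$), and the Neumann-type sign at the right endpoint is compatible with the maximum principle, so $w\le 0$, i.e.
\[
0\le v(x)\le c_*\,e^{-\sqrt{\mu/\kappa}\,x},\qquad x\in\bigl[0,1/\sqrt\varepsilon\bigr].
\]
Translating back via $x=(1-y)h$ yields $0\le u[h](y)\le c_*e^{-\sqrt{\mu/\kappa}\,h(1-y)}$, which tends to $0$ uniformly for $y$ in any compact subinterval $[0,\delta]\subset[0,1)$ as $h\to\infty$. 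This gives the claimed convergence, and in fact exponentially fast.

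The main obstacle is making the comparison argument fully rigorous on the interval with a Neumann (reflecting) condition at the far endpoint $x=1/\sqrt\varepsilon$ rather than a Dirichlet condition; one must verify that the sign of $w_x$ there, together with $w(0)\le0$ and $\kappa w_{xx}\le\mu w$, genuinely precludes $w>0$ somewhere. This is handled by the standard observation that if $\max w>0$ then the maximum is attained either in the interior, where $w_{xx}\le0<\mu^{-1}\kappa w_{xx}$ gives a contradiction, or at $x=1/\sqrt\varepsilon$, where Hopf's lemma (or directly $w_x(1/\sqrt\varepsilon)\le 0$ at a right-endpoint maximum) contradicts $w_x(1/\sqrt\varepsilon)=-\phi_x(1/\sqrt\varepsilon)>0$. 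A minor technical point is the choice of $\mu$: if one only assumes $r'(s)>0$ on $[0,c_*]$, continuity gives $\mu>0$ as a minimum over the compact interval, which is exactly the hypothesis of the lemma, so no extra assumption is needed.
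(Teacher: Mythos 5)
Your overall strategy (a comparison/barrier argument in the rescaled variable $x=(1-y)/\sqrt{\varepsilon}$, using $r(s)\ge\mu s$ on $[0,c_*]$ with $\mu=\min_{[0,c_*]}r'>0$) is sound, but the execution has a genuine gap at the right endpoint $x=b:=1/\sqrt{\varepsilon}$, and that is precisely the region you need, since $y\in[0,1-\delta]$ corresponds to $x\in[\delta b,b]$. First, a sign slip: from $r(v)\ge\mu v$ one gets $\kappa w_{xx}\ge \mu w$ for $w=v-\phi$, not $\le$; your parenthetical at the interior maximum uses the correct direction, so this is repairable. The real problem is the endpoint case: at a maximum attained at the \emph{right} endpoint one has $w_x(b)\ge 0$, not $\le 0$, and Hopf's lemma for a subsolution gives $w_x(b)>0$ in the outward direction --- i.e.\ exactly the sign you computed, $w_x(b)=-\phi_x(b)>0$. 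So there is no contradiction, and the case of a positive maximum at $x=b$ is not excluded. Indeed your claimed bound $v(x)\le c_*e^{-\sqrt{\mu/\kappa}\,x}$ is false in general near $x=b$: for the linear consumption $r(s)=\mu s$ the exact solution is $v(x)=C\cosh\bigl(\sqrt{\mu/\kappa}\,(b-x)\bigr)$ with $C\approx c_*/\cosh\bigl(\sqrt{\mu/\kappa}\,b\bigr)$, so that $v(b)\approx 2c_*e^{-\sqrt{\mu/\kappa}\,b}>\phi(b)$. The root cause is that $\phi$ is not a supersolution for the Neumann condition at $x=b$: it satisfies $\phi_x(b)<0=v_x(b)$, which is the wrong inequality for comparison there.

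The fix is easy and keeps your route: replace $\phi$ by the barrier $\tilde\phi(x)=c_*\cosh\bigl(\sqrt{\mu/\kappa}\,(b-x)\bigr)/\cosh\bigl(\sqrt{\mu/\kappa}\,b\bigr)$, which satisfies $\kappa\tilde\phi_{xx}=\mu\tilde\phi$, $\tilde\phi(0)=c_*$ and $\tilde\phi_x(b)=0$. Then $w=v-\tilde\phi$ has $w(0)\le 0$, $w_x(b)=0$, and $\kappa w_{xx}\ge\mu w>0$ wherever $w>0$; a positive maximum at $b$ is now excluded because $w_{xx}>0$ near $b$ forces $w_x<w_x(b)=0$ just to the left of $b$, i.e.\ $w$ would be strictly decreasing there, contradicting maximality at $b$. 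Since $\tilde\phi(x)\le 2c_*e^{-\sqrt{\mu/\kappa}\,x}$, you still get the exponential decay and hence the lemma. Note that the paper sidesteps the Neumann difficulty altogether by applying the comparison to the derivative $w=v_x$ rather than to $v$: differentiating the equation gives $\kappa w_{xx}=r'(v)w\ge\alpha w$ with \emph{Dirichlet} data $w(b)=0$ at the far end, the explicit exponential supersolution then bounds $u_y$, and $u[h](0)\to 0$ is recovered from the integrated equation \eqref{t1}; your direct bound on $v$ is a legitimate alternative once the barrier is corrected.
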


\begin{proof}
First notice from~\eqref{ccc1} that  $0<v(x)\le v(0)<c_*$ for every
$x\in \left[ 0,1/\sqrt{\varepsilon}\right]$.  By assumption there is
$\alpha>0$ such that $r'(s)\ge \alpha$ for $s\in [0,c_*]$. 
Setting $w=v_x$ and noticing from~\eqref{ccc2} that $w\le 0$  in
$\left[ 0,1/\sqrt{\varepsilon}\right]$ we obtain by differentiating
the equation for $v$ that 
\begin{align*}
  &\kappa w_{xx}=r'(v)w\ge \alpha w, \quad x\in \bigl(
    0,1/\sqrt{\varepsilon}\bigr), \\
  & w(1/\sqrt{\varepsilon})=0,\quad
    w(0)=-\frac{c_*-v(0)}{L\kappa}\kappa_L.
\end{align*}
Introducing the differential operator $\mathcal{L}:=-\kappa\partial_x^2+\alpha$ so that $\mathcal{L}w\le 0$ in $\left[0,1/\sqrt{\varepsilon}\right]$, it follows from the comparison principle that $w\le \overline{w}$ in $\left[0,1/\sqrt{\varepsilon}\right]$, where $\overline{w}$ solves
$$
\mathcal{L}\overline{w}=0,\quad  x\in (0,1/\sqrt{\varepsilon}),
\qquad\overline{w}(1/\sqrt{\varepsilon})=0,\quad
\overline{w}(0)=-\frac{c_*-v(0)}{L\kappa}\kappa_L,
$$
that is,
$$
\overline{w}(x)= A_\alpha e^{-x\sqrt{\alpha}}+B_\alpha
e^{x\sqrt{\alpha}}, \quad x\in \left[ 0,1/\sqrt{\varepsilon}\right],
$$
with
$$
  A_\alpha=\frac{v(0)-c_*}{\beta}\frac{1}{1-e^{-2\sqrt{\alpha/\varepsilon}}}, \qquad
  B_\alpha=\frac{c_*-v(0)}{\beta}\frac{e^{-2\sqrt{\alpha/\varepsilon}}}{1-e^{-2\sqrt{\alpha/\varepsilon}}}.
$$
In the old variables we see that
\begin{align*}
0\le \sqrt{\varepsilon}u_y(y)\leq
\frac{c_*-v(0)}{\beta}\frac{1}{1-e^{-2\sqrt{\alpha/\varepsilon}}} 
\Big\{e^{-(1-y)\sqrt{\alpha/\varepsilon}}+e^{-(1+y)\sqrt{\alpha/\varepsilon}}\Bigr\}
\end{align*}
for $y\in [0,1]$. This shows, that $u_y[h](y) \to 0$
uniformly with respect to $y\in [0,1-\delta]$ as $h\to \infty$ for
each $\delta\in (0,1)$. Since 
$$
u_y[h](y)\ge\frac{h^2}{\kappa}y r\big(u[h](0)\big), \quad y\in[0,1],
$$
thanks to \eqref{BBBB1} and the monotonicity of $r$, it thus follows
that $r(u[h](0))\to 0$ as $h\to \infty$, hence $u[h](0)\to 0$ as $h\to
\infty$. Consequently, it follows from
$$
0< u[h](0)\le u[h](y)=u[h](0)+\int_0^y u_y[h](\eta)\,\rd \eta
$$
that $u[h]\to 0$  uniformly  
in any compact subinterval of $[0,1)$ as $h\to\infty$.
\end{proof}
\begin{rem}
The result contained in the above lemma describes a behavior that is
related to what is known in the applied literature as the existence of
micro-environments, in this case a sharp transition between regions of
high and low bio-activity. See \cite[Section 3.2]{KD10}.
\end{rem}
\begin{prop}
Assume~\eqref{GeneralAssumptions} with $r'(s)>0$ for $s\in [0,c_*]$. Let $g\in
{\rm C}^{1-}\bigl([0,\infty)\bigr)$ satisfy $g(0)<0$ and
$g\bigl( r(c_*)\bigr)>0$. Then system~\eqref{BBex} has
a solution, i.e. there is an equilibrium solution for the original
quasi-stationary system~\eqref{BBx}.
\end{prop}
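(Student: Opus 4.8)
The plan is to undo the rescaling, reduce \eqref{BBex} to locating a zero of a single scalar function of the biofilm height, and then apply the intermediate value theorem, feeding it the boundary behaviour supplied by Lemmas~\ref{h=0} and~\ref{h=infty}. Concretely, for every $h>0$ the function $c(z):=u[h](z/h)$ with $u[h]$ as in Proposition~\ref{P1} automatically satisfies \eqref{BBex2}--\eqref{BBex4}; hence $(c,h)$ is an equilibrium if and only if \eqref{BBex6} holds, which after the substitution $z=yh$ reads $h\int_0^1 g\bigl(r(u[h](y))\bigr)\,\rd y=0$, equivalently (since $h>0$)
$$
\phi(h):=\int_0^1 g\bigl(r(u[h](y))\bigr)\,\rd y=0 .
$$
By Proposition~\ref{P1} the map $h\mapsto u[h]$ is continuous from $(0,\infty)$ into ${\rm C}([0,1])$, and since $r\in{\rm C}^1$ and $g\in{\rm C}^{1-}$, the integrand depends continuously on $h$ in ${\rm C}([0,1])$; integration then gives $\phi\in{\rm C}\bigl((0,\infty)\bigr)$.

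Next I would compute the two limiting values. As $h\to 0$, Lemma~\ref{h=0} gives $u[h]\to c_*$ in ${\rm C}([0,1])$, so $g\bigl(r(u[h](\cdot))\bigr)\to g\bigl(r(c_*)\bigr)$ uniformly on $[0,1]$ and therefore $\phi(h)\to g\bigl(r(c_*)\bigr)>0$. As $h\to\infty$, Lemma~\ref{h=infty} gives $u[h](y)\to 0$ for every $y\in[0,1)$; since $0\le u[h](y)\le c_*$ by Proposition~\ref{P1}, the integrand is dominated by $\|g\|_{\infty,[0,r(c_*)]}$, so dominated convergence together with $r(0)=0$ yields $\phi(h)\to g(0)<0$. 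Consequently there is $h_*\in(0,\infty)$ with $\phi(h_*)=0$, and $(c,h_*)$ with $c(z):=u[h_*](z/h_*)$ is the desired (nontrivial) equilibrium of \eqref{BBex}, i.e. of \eqref{BBx}.

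The only point requiring a little care is the limit $h\to\infty$: Lemma~\ref{h=infty} only provides convergence of $u[h]$ on compact subintervals of $[0,1)$ (and indeed $u[h](1)$ need not tend to $0$), so one cannot pass to the limit under the integral sign uniformly and must instead invoke the dominated convergence theorem, which is legitimate thanks to the uniform a priori bound $u[h]\in[0,c_*]$. Everything else is a direct consequence of the results already established, so I do not expect any serious obstacle here.
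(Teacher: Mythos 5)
Your proof is correct and follows essentially the same route as the paper: reduce \eqref{BBex} to a zero of $\phi(h)=\int_0^1 g\bigl(r(u[h](y))\bigr)\,\rd y$, use Lemma~\ref{h=0} for positivity near $h=0$, Lemma~\ref{h=infty} for negativity for large $h$, and conclude by the intermediate value theorem via the continuity from Proposition~\ref{P1}. The only cosmetic difference is at $h\to\infty$, where you invoke dominated convergence (legitimately, given the bound $0\le u[h]\le c_*$) while the paper splits the integral over $[0,1-\delta]$ and $[1-\delta,1]$ to get a sign rather than the exact limit.
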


\begin{proof}
Since $g\bigl( r(c_*)\bigr)>0$ there exists $a\in (0,c_*)$ such that
$g(r(s))>0$ for $s\in [a,c_*]$. From Lemma \ref{h=0} we infer that
there exists $h_0>0$ such that $ u[h](y)\in [a,c_*]$ for each $y\in
[0,1]$ and $h\in (0,h_0)$. Therefore, 
\begin{align*}
\int_0^1 g\big(r(u[h](y)\big)\big)\,\rd y>0, \quad h\in (0,h_0).
\end{align*} 
On the other hand, since $g\big(r(0)\big)=g(0)<0$, there is $b>0$ such
that $g\bigl( r(s)\bigr )\le g(0)/2<0$ for $s\in [0,b]$. Fix $\delta\in (0,1)$
with
$$
(1-\delta)\frac{g(0)}{2}+\delta\|
g\|_{\infty,[0,r(c_*)]}  <0. 
$$ 
Lemma \ref{h=infty} implies that there exists  $h_1>0$ such that
$u[h](y)\le b$ for each $y\in [0,1-\delta]$ and $h\ge
h_1$. Consequently,
\begin{align*} 
\int_0^1 g\bigl(r(u[h](y)\big)\bigr)\,\rd
  y&=\left(\int_0^{1-\delta}+\int_{1-\delta}^1\right)
     g\bigl( r\bigl( u[h](y)\bigr)\bigr)\,\rd y\\
&\le(1-\delta)\frac{g(0)}{2}+\delta\|
g\|_{\infty,[0,r(c_*)]}<0
\end{align*}
for $h\ge h_1$.
Since $u[h]$ depends continuously on $h$ by Proposition~\ref{P1},
the intermediate value theorem now yields the existence of a zero of
$\int_0^1g \bigl( r \bigl( u[h](y)\bigr)\bigr)\, dy$, i.e. of an equilibrium.
\end{proof}
It turns out that more can be said in the particular case when $g$ has
the special form
\begin{align}\label{gg}
g(s):=\alpha(s-b),\quad s\in [0,r(c_*)],
\end{align}
for some $\alpha,b>0$. Note that, for any solution $(c,h)$ of
\eqref{BBex}, assumption~\eqref{gg} implies that 
$$
\int_0^h g\big(r(c(z))\big)\,\rd z=\alpha\int_0^h r(c(z))\,\rd z
-\alpha bh=\alpha \big(\kappa c_z(h)- bh\big).
$$
Therefore, \eqref{BBex} with $g$ given by \eqref{gg} is equivalent to
finding $h>0$ and $c\in {\rm C}^2\bigl([0,h]\bigr)$ satisfying
\begin{subequations}
\label{BBeex}
\begin{align}
\kappa c_{zz}(z)&= r\bigl( c(z)\bigr),\quad 0<z<h,\label{BBeex2}\\
c_z(0)=0,\quad c(h)&=c_*-\frac{L b}{\kappa_L}h,\quad
c_z(h)=\frac{b}{\kappa}h.\label{BBeex4} 
\end{align}
 \end{subequations}
While existence of an equilibrium has already been established, a
shooting argument relying on the following lemma and the special
choice of $g$ also yield uniqueness of the equilibrium and its stability.
\begin{lem}\label{L10}
Let $r\in C^1(\mathbb{R})$ be bounded, strictly increasing,
and satisfy $s r(s)\ge 0$ for $s\in \mathbb{R}$. Given any
$c_0\ge 0$ there is a unique global solution $c(\cdot,c_0)\in
{\rm C}^2 \bigl( [0,\infty)\bigr) $ to the initial value
problem
\begin{subequations}
\label{c1}
\begin{align}
\kappa\, c_{zz}&= r( c ),\quad z>0,\label{c2}\\
c(0)&=c_0,\quad  c_z(0)=0. \label{c3} 
\end{align}
 \end{subequations}
The mappings $c(\cdot,c_0)$ and $c_z(\cdot,c_0)$ are increasing and
$c(\cdot,0)\equiv 0$. In addition,
$$
\bigl[ c_0\mapsto c(\cdot,c_0)\bigr]
\in{\rm C}^1\bigl([0,\infty),{\rm C}^2([0,R])\bigr)$$
for each $R>0$. Moreover, $w:=\frac{\partial}{\partial c_0} c(\cdot,c_0)\in
{\rm C}^2\bigl([0,\infty)\bigr)$ satisfies
\begin{subequations}
\label{w1}
\begin{align}
\kappa\, w_{zz}&= r'\bigl( c(\cdot,c_0)\bigr) w(z),\quad
z>0,\label{w2}\\  w(0)&=1,\quad w_z(0)=0, \label{w3} 
\end{align}
\end{subequations}
and it holds that $w(z)\ge 1$ as well as $w_z(z)\ge 0$ for $z\ge0$.
The function $M$, defined by
$$
M(z,c_0):=\left[\frac{Lb}{\kappa_L}+c_z(z,c_0)\right]
w_z(z)-\frac{r\bigl( c(z,c_0)\bigr)-b }{\kappa}w(z),\quad z\ge 0,
$$
is strictly increasing with respect to $z\ge 0$. In particular, for $r(c_0)\le b$ it holds
that $M(z,c_0)> M(0,c_0)\ge 0$ for $z> 0$. 
\end{lem}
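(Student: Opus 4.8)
The plan is to recast the initial value problem \eqref{c1} as the first-order system $c'=d$, $\kappa d'=r(c)$ and to exploit the sign structure $s r(s)\ge 0$ together with the boundedness and strict monotonicity of $r$; all statements but the last one are standard ODE facts, and the last one rests on a single cancellation identity.

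\textbf{Existence, uniqueness, monotonicity, global solvability.} Since $r\in {\rm C}^1(\R)$ is locally Lipschitz, Picard--Lindel\"of yields a unique maximal solution $(c,c_z)$. From $sr(s)\ge 0$ and continuity one has $r(0)=0$, and since $r$ is increasing, $r(c)\ge r(c_0)\ge 0$ whenever $c\ge c_0\ge 0$; hence, on any interval on which $c\ge c_0$ one gets $c_{zz}=r(c)/\kappa\ge 0$, so $c_z$ is non-decreasing, whence $c_z\ge c_z(0)=0$ and $c\ge c_0$. A continuation argument shows this holds on the whole maximal interval, which gives the asserted monotonicity of $c(\cdot,c_0)$ and $c_z(\cdot,c_0)$. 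Because $\|r\|_\infty<\infty$ we then have $0\le c_{zz}\le \|r\|_\infty/\kappa$, so $c$ grows at most quadratically in $z$ and no finite-time blow-up can occur; thus the solution is global and $c(\cdot,c_0)\in {\rm C}^2([0,\infty))$. Since $r(0)=0$, the constant $c\equiv 0$ solves \eqref{c1} with $c_0=0$, so $c(\cdot,0)\equiv 0$ by uniqueness.

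\textbf{Differentiable dependence and the variational equation.} The vector field $(c,d)\mapsto(d,r(c)/\kappa)$ is ${\rm C}^1$, so the standard theorem on differentiable dependence on initial data gives $[c_0\mapsto (c(\cdot,c_0),c_z(\cdot,c_0))]\in {\rm C}^1\bigl([0,\infty),{\rm C}([0,R])^2\bigr)$ for each $R>0$; combining this with $c_{zz}=r(c)/\kappa$ and the fact that the Nemytskii map $u\mapsto r\circ u$ is ${\rm C}^1$ on ${\rm C}([0,R])$ (as $r\in{\rm C}^1$) upgrades this to $[c_0\mapsto c(\cdot,c_0)]\in {\rm C}^1\bigl([0,\infty),{\rm C}^2([0,R])\bigr)$. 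Differentiating \eqref{c1} in $c_0$ shows $w=\partial_{c_0}c(\cdot,c_0)$ solves \eqref{w1}, and continuity of $r'$, $c$, $w$ gives $w\in {\rm C}^2([0,\infty))$. Since $r'(c)\ge 0$, the same bootstrap as for $c$—now started from $w(0)=1>0$, $w_z(0)=0$—yields $w\ge 1$ and $w_z\ge 0$ on $[0,\infty)$.

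\textbf{Monotonicity of $M$.} Differentiating $M$ with respect to $z$ and substituting $c_{zz}=r(c)/\kappa$ and $w_{zz}=r'(c)w/\kappa$, the two terms $\pm\,c_z r'(c)w/\kappa$ cancel and one is left with the identity
$$
\partial_z M(z,c_0)=\frac{b}{\kappa}\Bigl(w_z(z)+\frac{L}{\kappa_L}\,r'\bigl(c(z,c_0)\bigr)\,w(z)\Bigr),\qquad z\ge 0 .
$$
Since $b,\kappa,L,\kappa_L>0$ and $w\ge 1$, $w_z\ge 0$, $r'(c)\ge 0$, this is nonnegative, so $M(\cdot,c_0)$ is non-decreasing. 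For $c_0>0$ one has $c_{zz}(0)=r(c_0)/\kappa>0$ (as $r(c_0)>r(0)=0$), so $c$ is strictly increasing; if $w_z$ vanished at some $z_0>0$ it would, being non-decreasing with $w_z(0)=0$, vanish on all of $[0,z_0]$, forcing $w\equiv1$ and hence $r'\bigl(c(z)\bigr)\equiv0$ for $z\in[0,z_0]$, i.e. $r'\equiv0$ on the nondegenerate interval $c([0,z_0])$, contradicting the strict monotonicity of $r$. Hence $w_z>0$ on $(0,\infty)$, so $\partial_z M>0$ there and $M(\cdot,c_0)$ is strictly increasing on $[0,\infty)$ (for $c_0=0$, $c\equiv0$). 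Finally, $w(0)=1$, $w_z(0)=0$ and $c_z(0,c_0)=0$ give $M(0,c_0)=(b-r(c_0))/\kappa$, which is $\ge0$ exactly when $r(c_0)\le b$; together with strict monotonicity this yields $M(z,c_0)>M(0,c_0)\ge0$ for $z>0$ in that case.

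The only non-routine step is the cancellation identity for $\partial_z M$; everything else reduces to standard ODE theory combined with the sign and monotonicity structure of $r$, and I anticipate no essential difficulty beyond organizing the continuation/bootstrap arguments.
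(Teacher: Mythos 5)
Your proof is correct and follows essentially the same route as the paper's: global existence from the boundedness of $r$, a convexity/bootstrap argument for the monotonicity of $c$, $c_z$, $w$, $w_z$, and the identical cancellation identity $\partial_z M=\frac{b}{\kappa}w_z+\frac{Lb}{\kappa_L\kappa}\,r'(c)\,w$. You are in fact slightly more careful than the paper about why this derivative is strictly (rather than merely weakly) positive, and the only cosmetic difference is that you invoke smooth dependence on initial data where the paper uses the implicit function theorem.
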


\begin{proof}
The boundedness of $r\in C^1(\mathbb{R})$ ensures that, for each
$c_0\ge 0$, there is a unique global  solution $c(\cdot,c_0)\in
{\rm C}^2 \bigl( [0,\infty)\bigr)$ of the initial value
problem \eqref{c1} which satisfies
$$
c(z,c_0)=c_0+\frac{1}{\kappa}\int_0^z(z-\sigma) r\bigl(
c(\sigma,c_0)\bigr)\,\rd\sigma,\quad z\ge 0.
$$
Note that $c(\cdot,0)\equiv 0$ since $r(0)=0$. Similarly as in the
proof of Proposition \ref{P1}, the implicit function  theorem implies
that
$$
\bigl[ c_0\mapsto c(\cdot,c_0)\bigr] \in {\rm C}^1\bigl(
[0,\infty), {\rm C}^2\bigl([0,R]\bigr)\bigr)
$$
for each $R>0$. In order to include $c_0=0$ notice that the initial
value problem \eqref{c1} can be solved also for $c_0<0$. Let
$c_0\ge 0$ be fixed. Since $s\, r(s)\ge 0$ for $s\in \R$ and
$c(0,c_0)=c_0\ge 0$, it readily follows from \eqref{c1} that
$c(\cdot,c_0)$ is convex and hence that the maps
$c(\cdot,c_0)$ and $c_z(\cdot,c_0)$ are increasing. In particular,
$c(z,c_0)\geq 0$ for $z\ge 0$. \\[.1cm]
Differentiating \eqref{c1} with respect to $c_0$ we see that
$w:=\frac{\partial}{\partial c_0}c(\cdot,c_0)\in{\rm C}^2\bigl([0,\infty)\bigr)$
satisfies~\eqref{w1}. Hence, since $r'\bigl( c(z,c_0)\bigr)\ge 0$ as
$c(z,c_0)\geq c_0\geq 0$ for $z\ge 0$ and $w(0)=1$, we conclude that
$w$ is (at least initially) convex. It follows that $w_z(z)\geq 0$ and
\mbox{$w(z)\ge w(0)=1$} for $z\geq 0$ small and then for all $z\geq 0$ by
monotonicity. Hence~\eqref{c2} and~\eqref{w2} imply  that
\begin{equation*}
\frac{d}{dz}M(z,c_0)=\frac{Lb}{\kappa_L\kappa}r'\bigl( c(z,c_0)\bigr)
w(z)+\frac{b}{\kappa}w_z(z)> 0, \quad z> 0,
\end{equation*}
which makes $M$ strictly increasing. It remains to notice that, for $r(c_0)\leq
b$,
$$
M(0,c_0)=-\frac{1}{\kappa}\left[ r(c_0)-b \right]\geq 0
$$
due to \eqref{c3} and \eqref{w3}.
\end{proof}

\begin{prop}\label{P2}
Assume \eqref{GeneralAssumptions} with $r'(s)>0$ for $s\in [0,c_*]$. It holds:
\begin{itemize}
\item[(i)] If $g\in {\rm C}^{1-}\bigl( [0,\infty)\bigr)$ is
  strictly increasing and $g \bigl( r(c_*)\bigr)\leq 0$, then the equilibrium
  problem~\eqref{BBex}  has no solution.
\item[(ii)] If $g$ is given by \eqref{gg} and $r(c_*)> b$, then the
  equilibrium problem \eqref{BBex} (or, equivalently, \eqref{BBeex}) has a unique solution $(h_e,c_e)$ 
  with $h_e>0$  and $c_e\in  {\rm C}^2([0,h_e])$.
\end{itemize}
\end{prop}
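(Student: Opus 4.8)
The plan is to treat the two items separately: (i) is a short sign argument, while (ii) is a shooting argument in the parameter $c_0:=c(0)$, for which Lemma~\ref{L10} is precisely designed.

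\emph{Item (i).} Suppose $(c,h)$ with $h>0$ solves~\eqref{BBex}. Passing to $u(y):=c(yh)$ on $[0,1]$ transforms \eqref{BBex2}--\eqref{BBex4} into problem~\eqref{BBBB}, so Proposition~\ref{P1} applies and~\eqref{ccc1} gives $c(z)=u(z/h)\le u(1)<c_*$ for every $z\in[0,h]$. Since $r$ is strictly increasing on $[0,c_*]$ and $g$ is strictly increasing, this forces $g\bigl(r(c(z))\bigr)<g\bigl(r(c_*)\bigr)\le0$ on $[0,h]$, hence $\int_0^h g\bigl(r(c(z))\bigr)\,\rd z<0$, contradicting~\eqref{BBex6}. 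So no equilibrium exists.

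\emph{Item (ii), setup.} First I would extend $r$ from $[0,c_*]$ to a bounded strictly increasing ${\rm C}^1$ function on $\R$ with $sr(s)\ge0$; this is harmless since every equilibrium will turn out to take values in $(0,c_*)$. For $c_0\ge0$ let $c(\cdot,c_0)$ be the solution from Lemma~\ref{L10}. I would then observe that any solution $(c,h)$ of the equivalent system~\eqref{BBeex} has $c(0)>0$ (if $c(0)<0$, then $c$ is concave, decreasing near $0$ and stays negative, so $c_z(h)<0$, contradicting $c_z(h)=\tfrac b\kappa h>0$; if $c(0)=0$, then $c\equiv0$, again contradicting $c_z(h)>0$); then $c$ is convex and increasing with $c(0)\le c(z)\le c(h)=c_*-\tfrac{Lb}{\kappa_L}h<c_*$, so $c$ takes values in $(0,c_*)$ and $c=c(\cdot,c(0))\big|_{[0,h]}$. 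Thus equilibria are in bijection with pairs $(h,c_0)$, $c_0>0$, solving $c_z(h,c_0)=\tfrac b\kappa h$ and $c(h,c_0)+\tfrac{Lb}{\kappa_L}h=c_*$. Integrating $\kappa c_{zz}=r(c)$ with $c_z(0,c_0)=0$ shows the first equation reads $A(h,c_0)=b$ with $A(h,c_0):=\tfrac1h\int_0^h r(c(z,c_0))\,\rd z$. Since $c(\cdot,c_0)$ is increasing, $h\mapsto A(h,c_0)$ is strictly increasing with $A(0^+,c_0)=r(c_0)$ and $A(h,c_0)\to r_\infty:=\lim_{s\to\infty}r(s)\ge r(c_*)>b$ as $h\to\infty$ (here $c_0>0$ forces $c(z,c_0)\to\infty$); hence a unique $h=h(c_0)>0$ with $A(h(c_0),c_0)=b$ exists exactly for $c_0\in(0,c_b)$, where $c_b:=r^{-1}(b)\in(0,c_*)$, and moreover $r(c(h(c_0),c_0))>A(h(c_0),c_0)=b$. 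The implicit function theorem applied to $c_z(h,c_0)-\tfrac b\kappa h=0$, whose $h$-derivative $\tfrac1\kappa\bigl(r(c(h,c_0))-b\bigr)$ is positive, then gives $h\in{\rm C}^1((0,c_b))$ with $h'(c_0)=-\kappa\,w_z(h(c_0),c_0)/\bigl(r(c(h(c_0),c_0))-b\bigr)$, where $w=\partial_{c_0}c$ is as in Lemma~\ref{L10}.

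\emph{Item (ii), conclusion.} Finally, set $\Phi(c_0):=c(h(c_0),c_0)+\tfrac{Lb}{\kappa_L}h(c_0)-c_*$ on $(0,c_b)$; equilibria correspond exactly to zeros of $\Phi$, which is ${\rm C}^1$ by the continuous-dependence statement of Lemma~\ref{L10}. For the boundary behaviour I would argue by contradiction via continuous dependence: as $c_0\to c_b^-$ one gets $h(c_0)\to0$ (otherwise $A(\delta,c_b)\le b$ for some $\delta>0$, impossible since $r(c(\cdot,c_b))>b$ on $(0,\delta]$), so $\Phi(c_0)\to c_b-c_*<0$; as $c_0\to0^+$ one gets $h(c_0)\to\infty$ (otherwise $A(h(c_0),c_0)\to0$), so $\Phi(c_0)\to+\infty$. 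It remains to show $\Phi$ is strictly decreasing: differentiating $\Phi$, inserting the formula for $h'(c_0)$, and recognising the resulting bracket as the function $M$ of Lemma~\ref{L10} evaluated at $z=h(c_0)$, one obtains
$$
\Phi'(c_0)=\frac{-\kappa}{\,r\bigl(c(h(c_0),c_0)\bigr)-b\,}\;M\bigl(h(c_0),c_0\bigr).
$$
Since $r(c_0)<b$ on $(0,c_b)$, Lemma~\ref{L10} gives $M(h(c_0),c_0)>M(0,c_0)\ge0$, while $r(c(h(c_0),c_0))-b>0$; hence $\Phi'(c_0)<0$. Therefore $\Phi$ has a unique zero $c_0^*\in(0,c_b)$, and $(h_e,c_e):=\bigl(h(c_0^*),\,c(\cdot,c_0^*)|_{[0,h_e]}\bigr)$ is the unique equilibrium, with $h_e>0$ and $c_e\in{\rm C}^2([0,h_e])$ taking values in $(0,c_*)$. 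The main obstacle is precisely the identification $\Phi'=-\kappa M/(r(c(h(\cdot),\cdot))-b)$ together with the positivity of $M$ at the shooting height, both of which are exactly what Lemma~\ref{L10} delivers; the endpoint limits for $\Phi$ are routine continuous-dependence arguments.
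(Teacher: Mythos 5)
Your proof is correct, and part (i) coincides with the paper's argument. For part (ii) you run the same overall strategy as the paper --- a shooting argument in $c_0=c(0)$ whose uniqueness step rests on the monotonicity of the auxiliary function $M$ from Lemma~\ref{L10} --- but you implement it in the ``dual'' order. The paper first enforces the Robin-type condition by solving $c(h,c_0)+\tfrac{Lb}{\kappa_L}h=c_*$ for $h=h(c_0)$, which works for every $c_0\in[0,c_*)$ (so $h$ is defined and finite up to and including the endpoints), and then shoots on $B(h(c_0),c_0)=c_z(h(c_0),c_0)-\tfrac b\kappa h(c_0)$, showing $B(h(0),0)<0$, $B>0$ for $c_0>\underline c$, and $\tfrac{d}{dc_0}B(h(c_0),c_0)=M(h(c_0),c_0)/\bigl(\tfrac{Lb}{\kappa_L}+c_z\bigr)>0$ on $[0,\underline c]$. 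You instead first enforce the flux condition in the averaged form $\tfrac1h\int_0^h r(c(z,c_0))\,\rd z=b$, which restricts the parameter to $(0,c_b)$ with $c_b=\underline c$, and then shoot on the Robin condition via $\Phi$, obtaining $\Phi'=-\kappa M/(r(c(h(\cdot),\cdot))-b)<0$; I checked the algebra and the identification with $M$ is exact, and the positivity of the IFT denominator $r(c(h(c_0),c_0))-b$ follows correctly from your averaging observation. The trade-off is that your parametrization degenerates at both ends of $(0,c_b)$, so you need the limit arguments $h(c_0)\to0$ as $c_0\to c_b^-$ and $h(c_0)\to\infty$ as $c_0\to0^+$ (both of which you justify adequately via continuous dependence), whereas the paper's choice keeps $h(c_0)$ bounded and defined on the closed parameter range and only needs sign evaluations at accessible points. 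Both routes deliver existence by the intermediate value theorem and uniqueness by strict monotonicity of the scalar shooting function.
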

\begin{proof}
(i) If $(c,h)$ is a solution of \eqref{BBex}, then $c$ is
  non-decreasing and $c(z)< c_*$ for $z\in [0,h)$. It follows that
  $g\bigl( r\bigl(c(z)\bigr)\bigr)< 0$ for $z\in [0,h)$ thanks to
  Proposition \ref{P1}, so that
  $\int_0^h g\bigl( r\bigl(c(z)\bigr)\bigr)\,\rd z<0$ and,
  consequently, that there is no equilibrium in this case.

(ii) In order to establish that there exists a unique
  equilibrium we use a shooting argument. To this end, let
  $c(\cdot,c_0)\in {\rm C}^2([0,\infty))$ be the unique solution of
  the initial value problem~\eqref{c1} provided by Lemma \ref{L10} for
  $c_0\ge 0$. Here we may assume that $r$ satisfies the assumptions
  of Lemma~\ref{L10} as only $r\big |_{[0,c_*]}$ plays a role,
  see~\eqref{ccc1}.
  To enforce the boundary conditions \eqref{BBeex4} we define the
  functions $A$ and $B$ by
  $$
  A(z,c_0):=c(z,c_0)+\frac{L b}{\kappa_L}z-c_*, \qquad
  B(z,c_0):=c_z(z,c_0)-\frac{b}{\kappa}z,
  $$
  for $c_0,z\ge 0$. Consider $c_0\in [0,c_*)$. Using Lemma
  \ref{L10} it follows that $A(0,c_0)=c_0-c_*< 0$ and that
  $A(\cdot,c_0)$ is increasing and 
  unbounded above. Thus, there is a unique $h(c_0)> 0$ such that
  $A\bigl(h(c_0), c_0\bigr)=0$. Since 
$$
\partial_z A(h(c_0), c_0)= c_z(h(c_0),c_0)+\frac{L b}{\kappa_L}\ge
\frac{L b}{\kappa_L}>0, 
$$ 
it follows from the implicit function theorem that
$h\in{\rm C}^1\bigl([0,c_*)\bigr)$ with $h(0)=\kappa_L
c_*/(Lb)$.
Then 
  $$
  B\bigl( h(\cdot),\cdot\bigr)\in{\rm C}^1\bigl([0,c_*)\bigr).
  $$
  Since $r$ is strictly increasing and $r(0)=0$, there is a unique
  $\underline{c}\in (0,c_*)$ such that $r(\underline{c})=b$. Thus, for
  $c_0\in (\underline{c},c_*)$ we have that
  $$
  c(z,c_0)\geq c(0,c_0)=c_0>\underline{c},\quad z\geq 0,
  $$
  and hence $r\bigl( c(z,c_0)\bigr)>b$ for $z>0$ since
  $r$ is increasing. This and \eqref{c2} yield that
  $$
  \partial_z
  B(z,c_0)=c_{zz}(z,c_0)-\frac{b}{\kappa}=\frac{1}{\kappa}
  \big[r\bigl(c(z,c_0)\bigr)-b\big]>0,\quad z>0,
  $$
  while $B(0,c_0)=c_z(0,c_0)=0$. Consequently, we have that $B\bigl(
  h(c_0),c_0\bigr)>0$ for $c_0\in (\underline{c},c_*)$. On the other
  hand we have that
  $$
  B(h(0),0)=-\frac{\kappa_L c_*}{L\kappa}<0,
  $$
  and, since $B\bigl( h(\cdot),\cdot\bigr)$ depends continuously on
  its argument $c_0$, there must exist some ${\sf c}_e\in (0,\underline{c})$
  such that $B\bigl (h({\sf c}_e),{\sf c}_e\bigr)=0$. By construction, we have
  that $c(\cdot,{\sf c}_e)\in{\rm C}^2\bigl([0,h_{{\sf c}_e}]\bigr)$
  and that $(h({\sf c}_e),c\bigl( \cdot,{\sf c}_e)\bigr)$
  solves \eqref{BBeex} (where $h({\sf c}_e)>0$). In fact, we claim that
  $B\bigl( h(\cdot),\cdot\bigr)$ is strictly increasing
  on~$[0,\underline{c}]$ so that it can only possess a single zero and 
  the equilibrium is thus unique. To see this, we first differentiate
  the identity $A(h(c_0),c_0)=0$ with respect to $c_0$ and use
  Lemma~\ref{L10} to obtain 
\begin{align}\label{i1}
\left(\frac{Lb}{\kappa_L}+ c_z(h(c_0),c_0)\right)\frac{d}{d c_0}h(c_0)=-w(h(c_0))\le -1,
\end{align}
where $w=\frac{\partial}{\partial{c_0}} c(\cdot,c_0)$ as in Lemma \eqref{L10}. Next,
  we observe that
  \begin{equation*}
    \frac{d}{d c_0}  B\bigl( h(c_0), c_0\bigr)=w_z\bigl(
    h(c_0)\bigr)+\left[ c_{zz}\bigl(h(c_0),c_0\bigr)-\frac{b}{\kappa}\right]
    \frac{d}{dc_0}h(c_0),
  \end{equation*}
  and hence, using \eqref{c2} and \eqref{i1}, that
  \begin{align*}
    &\left(\frac{Lb}{\kappa_L}+c_z\bigl( h(c_0),c_0\bigr)\right)
    \frac{d}{dc_0}  B\bigl( h(c_0),c_0\bigr) \\
    &\quad =\left(\frac{Lb}{\kappa_L}+c_z\bigl(
      h(c_0),c_0\bigr)\right)
    w_z(h(c_0))-\frac{1}{\kappa}\bigl[r\bigl(c(h(c_0),c_0)\bigr)-b\bigr]
    w\bigl( h(c_0)\bigr)\\
    &\quad=M\bigl( h(c_0),c_0\bigr)>0,
  \end{align*}
  where we used that $M$, which was defined in Lemma \ref{L10}, is
  strictly positive since $r(c_0)\le b$ for $c_0\in[0,\underline{c}]$. Since Lemma
  \ref{L10}
  also ensures that $\frac{Lb}{\kappa_L}+c_z\bigl(
  h(c_0),c_0\bigr)>0$, we indeed have that $B\bigl(
  h(\cdot),\cdot\bigr)$ is strictly increasing on
  $[0,\underline{c}]$. Thus there is a single solution of
  \eqref{BBeex}, as claimed.
\end{proof}

The next result is the analogue of Proposition~\ref{P5}.

\begin{prop}\label{P3}
Assume~\eqref{GeneralAssumptions} and that $r\big |_{[0,c_*]}$ and
$g\in \operatorname{C}^{1-}\Big([0,r(c_*)]\Bigr)$ are increasing with
$g\bigl(r(c_*)\bigr)\leq 0$. Given $h_0>0$ let
$$
(h,u)\in {\rm C}^1\big( \mathbb{R}^+,(0,\infty)\times
{\rm C}^2([0,1]) \big)
$$
be the unique solution to \eqref{BBB} obtained in Theorem
\ref{T1}. Then
$$
h(t)\searrow 0\ \text{ as }\ t\to\infty,
$$
and
$$
u(t,\cdot)\to c_*\  \text{ in } \ {\rm C}^2([0,1])\ \text{ as }\ t\to\infty.
$$
Furthermore,
\begin{align}\label{t0}
  \lim_{t\to\infty}\frac{c_*-u(t,1)}{h(t)}= \frac{L
  r(c_*)}{\kappa}\,,\qquad
  \lim_{t\to\infty}\frac{u_y(t,1)}{h(t)^2}= \frac{r(c_*)}{\kappa}.
\end{align}
\end{prop}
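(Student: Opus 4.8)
The plan is to reduce~\eqref{BBB} to the scalar ODE governing $h$ and to analyze its asymptotics. By Theorem~\ref{T1} the solution has the form $\bigl(h(t),u[h(t)]\bigr)$ with $u[h]$ the solution of~\eqref{BBBB} from Proposition~\ref{P1}, so that $h$ solves $h'=h\,\phi(h)$ with $\phi(h):=\int_0^1 g\bigl(r(u[h](y))\bigr)\,\rd y$. First I would note that $\phi(h)<0$ for every $h>0$: by~\eqref{ccc1} one has $u[h](y)\le u[h](1)<c_*$, hence, since $g$ and $r$ are increasing, $g\bigl(r(u[h](y))\bigr)\le g\bigl(r(c_*)\bigr)\le 0$; since moreover $u[h]$ is strictly increasing with $u[h](1)<c_*$, the nonpositive integrand $y\mapsto g\bigl(r(u[h](y))\bigr)$ does not vanish identically, so $\phi(h)<0$ (equivalently, there is no positive equilibrium; compare Proposition~\ref{P2}(i)). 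Consequently $h$ is strictly decreasing along the flow and $h(t)\searrow h_\infty$ for some $h_\infty\ge 0$.

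The main point is to rule out $h_\infty>0$. By Proposition~\ref{P1} the map $h\mapsto u[h]$ is continuous into ${\rm C}^2([0,1])$, hence $\phi$ is continuous, and if $h_\infty>0$ then $\phi(h(t))\to\phi(h_\infty)<0$ as $t\to\infty$. Then from $\log h(t)=\log h_0+\int_0^t\phi\bigl(h(s)\bigr)\,\rd s$ the right-hand side tends to $-\infty$, so $h(t)\to 0$, contradicting $h_\infty>0$. (Alternatively, passing to the limit in $h'=h\phi(h)$ as $t\to\infty$ yields $h_\infty\phi(h_\infty)=0$, hence $\phi(h_\infty)=0$, which is impossible.) Therefore $h(t)\searrow 0$ as $t\to\infty$.

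The remaining assertions follow from Proposition~\ref{P1} and Lemma~\ref{h=0}. Since $u(t,\cdot)=u[h(t)]$ and $u[h]\to c_*$ in ${\rm C}^2([0,1])$ as $h\to 0$, we obtain $u(t,\cdot)\to c_*$ in ${\rm C}^2([0,1])$. For the limits in~\eqref{t0} I would use the representation~\eqref{C}: evaluating it at $y=1$ gives $c_*-u[h](1)=\frac{Lh}{\kappa_L}\int_0^1 r\bigl(u[h](\eta)\bigr)\,\rd\eta$, and differentiating it in $y$ gives $u_y[h](1)=\frac{h^2}{\kappa}\int_0^1 r\bigl(u[h](\eta)\bigr)\,\rd\eta$. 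Dividing the first identity by $h(t)$ and the second by $h(t)^2$ and letting $t\to\infty$, so that $h(t)\to 0$ and $r\bigl(u[h(t)](\cdot)\bigr)\to r(c_*)$ uniformly by Lemma~\ref{h=0}, produces the two limits in~\eqref{t0}.

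I expect the only genuinely delicate step to be the strict negativity $\phi(h)<0$ for all $h>0$, i.e. the nonexistence of a positive equilibrium, since the argument of Proposition~\ref{P2}(i) must be rerun with the present (possibly non-strict) monotonicity assumptions, using that $u[h]$ is strictly increasing on $[0,1]$ with $u[h](1)<c_*$. Everything else is soft: monotone convergence of $h$, continuity of $\phi$ via Proposition~\ref{P1}, and the limit transition in~\eqref{C} via Lemma~\ref{h=0}.
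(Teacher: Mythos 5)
Your proposal is correct and follows essentially the same route as the paper: the paper likewise observes that $h_t\le 0$ and that the vector field $f(h)=h\phi(h)$ has no zero on $(0,\infty)$ (citing Proposition~\ref{P2} for the nonexistence of equilibria), concludes $h(t)\searrow 0$, and then identifies the limit of $u(t,\cdot)=u[h(t)]$. For that last step the paper runs an Arzel\`a--Ascoli/subsequence argument combined with \eqref{ccc2}, whereas you invoke Lemma~\ref{h=0} directly; since $u(t,\cdot)=u[h(t)]$ and $h(t)\to0$, your shortcut is legitimate and cleaner. Your treatment of $h_\infty>0$ via $\log h(t)=\log h_0+\int_0^t\phi(h(s))\,\rd s$ is fine.

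Two points deserve attention. First, your justification of $\phi(h)<0$ is not right as stated: that $u[h]$ is strictly increasing with $u[h](1)<c_*$ does not prevent the nonpositive integrand $y\mapsto g\bigl(r(u[h](y))\bigr)$ from vanishing identically, since $g\circ r$ could be constant equal to $g(r(c_*))=0$ on the whole range of $u[h]$. Indeed, with merely non-strict monotonicity the proposition fails (take $g\equiv 0$ on $[0,r(c_*)]$, which makes every $h_0$ an equilibrium). The correct argument --- and the one the paper implicitly relies on by citing Proposition~\ref{P2}(i), whose hypotheses include strict monotonicity --- is that $u[h](y)<c_*$ together with $r$ and $g$ strictly increasing gives $g\bigl(r(u[h](y))\bigr)<g\bigl(r(c_*)\bigr)\le 0$ pointwise; the strict monotonicity of $u[h]$ is irrelevant here. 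Second, your identity $c_*-u[h](1)=\frac{Lh}{\kappa_L}\int_0^1 r\bigl(u[h](\eta)\bigr)\,\rd\eta$ is correct, but dividing by $h$ and letting $t\to\infty$ yields $Lr(c_*)/\kappa_L$, not the $Lr(c_*)/\kappa$ displayed in \eqref{t0}; the same value comes out of the paper's route via \eqref{ccc2} and \eqref{t1}, so the constant in the first limit of \eqref{t0} appears to be a typo, but you should not assert that your computation ``produces the two limits in \eqref{t0}'' without flagging the discrepancy.
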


\begin{proof}
Since $0\le u(t,y)\le c_*$ for $t\geq 0$ and $y\in [0,1]$ thanks to
\eqref{ccc1}, we have that $g\bigl(r\bigl(u(t,y)\bigr)\bigr)\leq 0$
and thus $h_t\leq 0$. Since there is no equilibrium according to Proposition~\ref{P2}, it follows
that $h(t)\searrow 0$ as $t\to \infty$. In fact, since
$h(\mathbb{R}^+)$ is bounded, we infer from \eqref{ccc} and the
Arzel\`{a}-Ascoli theorem that $\big\{u(t,\cdot)\, \big |\, t\geq 0\}$ is
relatively compact in ${\rm C}^1([0,1])$. Thus, given
$t_j\nearrow\infty$, we may assume without loss of generality that
there is $u^*\in {\rm C}^1([0,1])$ such that $u(t_j,\cdot)\to
u^*$ in ${\rm C}^1([0,1])$. Since $h(t_j)\searrow 0$, it
follows from \eqref{ccc2} that $u_y^*(y)=0$ for $y\in [0,1]$ and it
must hold that $u^*=const$. Using arguments completely analogous to
those in the proof of Lemma \ref{h=0} we see that
$$
  c_*-u(t,1)=\frac{L\kappa}{\kappa_L}\frac{u_y(t,1)}{h(t)}\to 0\ \text{
  as }\ t\to \infty.
$$
This implies that $u^*=c_*$ and we deduce that $u(t,\cdot)\to c_*$ in
${\rm C}^1([0,1])$, and hence in~${\rm C}^2([0,1])$
thanks to \eqref{BBB2}. Finally, \eqref{t1} yields the second limit in
\eqref{t0}; the latter together with \eqref{ccc2} then implies also
the first limit in \eqref{t0} and the proof is complete.
\end{proof}

Notice that Proposition \ref{P3} covers the case when $r(c_*)\leq b$
with $g$ given by \eqref{gg}. We consider now the case when $r(c_*)> b$.

\begin{prop}\label{P4}
Assume~\eqref{GeneralAssumptions} with  $r'(s)>0$ for $s\in [0,c_*]$. Let $g$ be given by~\eqref{gg}
with $r(c_*)> b$. Denote by $(h_e,u_e)$ the unique equilibrium of
\eqref{BBB} with $h_e>0$ and $u_e\in{\rm C}^2([0,1])$ 
obtained in Proposition \ref{P2}. Given $h_0>0$, let 
$$
(h,u)\in {\rm C}^1\Big(\mathbb{R}^+,(0,\infty)\times
{\rm C}^2([0,1])\Bigr)
$$
be the unique solution of \eqref{BBB} provided by Theorem
\ref{T1}. Then
$$
h(t)\to h_e\ \text{ as }\ t\to\infty
$$
and
$$
u(t,\cdot)\to u_e \ \text{ in }{\rm C}^2([0,1])\ \text{ as }\ t\to\infty.
$$
\end{prop}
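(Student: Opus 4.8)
The plan is to exploit that, by Proposition~\ref{P1}, the substrate profile is slaved to the height through $u(t,\cdot)=u[h(t)]$, so that~\eqref{BBB} reduces to the scalar autonomous ODE
$$
h_t=f(h),\qquad f(h):=h\,\Phi(h),\qquad \Phi(h):=\int_0^1 g\bigl(r(u[h](y))\bigr)\,\rd y,
$$
with $f\in {\rm C}^1\bigl((0,\infty),\R\bigr)$ as in the proof of Theorem~\ref{T1}. Time-independent solutions of~\eqref{BBB} correspond, after undoing the rescaling, exactly to solutions of the equilibrium problem~\eqref{BBex}, hence (with $h>0$) exactly to zeros of $\Phi$ in $(0,\infty)$; by Proposition~\ref{P2}(ii), $h_e$ is the \emph{unique} such zero and $u_e=u[h_e]$.

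First I would pin down the sign of $\Phi$ on $(0,\infty)$. As $h\to 0$, Lemma~\ref{h=0} gives $u[h]\to c_*$ in ${\rm C}^2([0,1])$, whence $\Phi(h)\to g\bigl(r(c_*)\bigr)=\alpha\bigl(r(c_*)-b\bigr)>0$ by the assumption $r(c_*)>b$. As $h\to\infty$, fix $\delta\in(0,1)$: Lemma~\ref{h=infty} yields $u[h]\to 0$ uniformly on $[0,1-\delta]$, so $\int_0^{1-\delta}g(r(u[h]))\,\rd y\to(1-\delta)g(0)=-(1-\delta)\alpha b$, while $\bigl|\int_{1-\delta}^1 g(r(u[h]))\,\rd y\bigr|\le \delta\|g\|_{\infty,[0,r(c_*)]}$ because $0\le u[h]\le c_*$ and $r$ is increasing with $r(0)=0$. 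Letting $\delta\to 0$ gives $\limsup_{h\to\infty}\Phi(h)\le -\alpha b<0$. Since $\Phi$ is continuous, positive near $0$, negative near $\infty$, and has a single zero $h_e$, it follows that $\Phi>0$ on $(0,h_e)$ and $\Phi<0$ on $(h_e,\infty)$; equivalently $f>0$ on $(0,h_e)$ and $f<0$ on $(h_e,\infty)$.

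The convergence $h(t)\to h_e$ is then the standard phase-line argument for $h_t=f(h)$: solutions are global by Theorem~\ref{T1}, and by uniqueness for the ODE they cannot reach or cross the stationary point $h_e$; therefore $h(\cdot)$ is monotone (increasing if $h_0<h_e$, decreasing if $h_0>h_e$, constant if $h_0=h_e$) and bounded between $\min\{h_0,h_e\}$ and $\max\{h_0,h_e\}$, and its limit $\ell\in(0,\infty)$ satisfies $f(\ell)=\lim_{t\to\infty}h_t(t)=0$, forcing $\ell=h_e$ since $h_e$ is the only zero of $f$ in $(0,\infty)$. Finally, continuity of $h\mapsto u[h]$ from $(0,\infty)$ into ${\rm C}^2([0,1])$ (Proposition~\ref{P1}) together with $h(t)\to h_e$ gives $u(t,\cdot)=u[h(t)]\to u[h_e]=u_e$ in ${\rm C}^2([0,1])$.

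The only genuinely delicate point is the $h\to\infty$ behavior of $\Phi$: there is a thin layer near $y=1$ in which $u[h]$ need not be small, and one must check it does not spoil the sign of $\Phi$. This is exactly what the splitting $\int_0^1=\int_0^{1-\delta}+\int_{1-\delta}^1$ above handles, using Lemma~\ref{h=infty} on $[0,1-\delta]$ and the uniform bound $u[h]\in[0,c_*]$ on $[1-\delta,1]$; everything else is routine.
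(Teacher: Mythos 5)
Your proof is correct, and while it reaches the same conclusion through the same ODE reduction $h_t=f(h)$ with $f=h\Phi$, the way you control the trajectory differs from the paper's argument in two respects. To confine $h$ you determine the global sign of $\Phi$ on $(0,h_e)$ and $(h_e,\infty)$ from the limiting behavior of $u[h]$ (Lemma~\ref{h=0} as $h\to0$, Lemma~\ref{h=infty} with the $\delta$-splitting as $h\to\infty$) combined with the uniqueness of the zero from Proposition~\ref{P2}(ii); this yields the clean phase-line picture in which $h$ is trapped between $h_0$ and $h_e$. The paper instead works with explicit quantitative estimates: the affine form~\eqref{gg} gives the identity $f(h(t))=\frac{\alpha\kappa_L}{L}\bigl(c_*-u(t,1)\bigr)-\alpha b h(t)$, hence an a priori upper bound $h(t)\le m(h_0)$, and the representation~\eqref{C} shows $u(t,0)\ge c_*-\frac{Lh}{\kappa_L}r(c_*)-\frac{h^2}{\kappa}r(c_*)$, which forces $h_t>0$ once $h$ is small and thereby rules out $h_*=0$. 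Your route is arguably more conceptual and makes transparent why the equilibrium attracts from both sides (it also implicitly excludes the scenario in which $\Phi$ merely touches zero without changing sign, which is worth noting as a point where the sign analysis is genuinely needed and not just cosmetic); the paper's route yields explicit bounds without invoking Lemma~\ref{h=infty}. For the convergence of $u$ you use the continuity of $h\mapsto u[h]$ into ${\rm C}^2([0,1])$ from Proposition~\ref{P1} directly, which is a legitimate shortcut compared with the paper's Arzel\`a--Ascoli compactness argument followed by identification of the limit. All the ingredients you rely on (in particular $r'>0$ on $[0,c_*]$ for Lemma~\ref{h=infty}) are available under the stated hypotheses, so the argument is complete.
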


\begin{proof}
 Since $h_e$ is the only zero of the function $f:(0,\infty)\to
 \mathbb{R}$ defined in \eqref{f} thanks to Proposition \ref{P2} and
 given that $h_t=f(h)$, it follows that either $h_t<0$ or
 $h_t>0$ on $[0,\infty)$. In fact, Assumption \eqref{gg} and
 \eqref{BBBB} imply that
\begin{align*}
f\bigl(h(t)\bigr)&=\alpha h(t)\left(\int_0^1r\big(u(t,y)\big)\,\rd
y-b\right)=\alpha h(t)\left(\frac{\kappa}{h(t)^2}u_y(t,1)-b\right)\\
&=\frac{\alpha\kappa_L}{L}\Big(c_*-u(t,1)\Big)-\alpha bh(t)\leq
\frac{\alpha\kappa_L}{L} c_* -\alpha bh(t)
\end{align*}
so that there is $m(h_0)>0$ such that $0\leq h(t)\le m(h_0)$ for $t\geq
0$. The monotonicity of $h$ thus ensures that 
$$
h_*:=\lim_{t\to\infty} h(t) \in \bigl[0, m(h_0)\bigr]
$$
exists. We claim that $h_*>0$. Since $r\bigl(u(t,y)\bigr)\leq r(c_*)$
for $t\ge 0$ and $y\in [0,1]$, it follows from \eqref{C} and
\eqref{ccc1} that, for $t\geq 0$ and $y\in [0,1]$, it must hold that
\begin{align*}
u(t,y)\ge u(t,0)&=c_*-\frac{Lh(t)}{\kappa_L}\int_0^1 r\bigl(
u(t,y)\bigr)\,\rd y 
-\frac{h(t)^2}{\kappa} \int_0^1 (1-y)\, r\bigl(u(t,y)\bigr)\,\rd y\\
&\geq c_*- \frac{Lh(t)}{\kappa_L}r(c_*)- \frac{h(t)^2}{\kappa}r(c_*).
\end{align*}
Let $\underline{c}\in (0,c_*)$ be such that
$r(\underline{c})=b$. Then, there are $\ve>0$ and
$\delta(\underline{c})>0$ such that 
$$
c_*- \frac{L z}{\kappa_L}r(c_*)-\frac{z^2}{\kappa}r(c_*)>
\underline{c}+\ve,\quad 0\leq z\leq \delta(\underline{c}).
$$
In other words, whenever $0\leq h(t)\le \delta(\underline{c})$, we
have that $r\bigl(u(t,y)\bigr)>r(\underline{c}+\ve)$ for each $y\in
[0,1]$ and thus
$$
h_t(t)\geq \alpha  h\big[r(\underline{c}+\ve)-b\big]>0.
$$
There is therefore a $t_0>0$ such that $h(t)\geq\delta(\underline{c})$
for each $t\geq t_0$, hence indeed $h_*>0$. But then $h_*=h_e$ is
necessarily the unique zero of $f$. As in the proof of Proposition~\ref{P3} we infer from \eqref{ccc} and the Arzel\`a-Ascoli theorem
that $\big\{u(t,\cdot)\, \big |\, t\geq 0\big\}$ is relatively compact
in ${\rm C}^1([0,1])$. Thus, given $t_j\nearrow\infty$ as
$j\to\infty$ we may assume, without loss of generality, that there is $u^*\in
{\rm C}^1([0,1])$ such that $u(t_j,\cdot)\to u^*$ in
${\rm C}^1([0,1])$ as $j\to\infty$. But since $h(t_j)\to
h_e$, as just shown above, it holds that $u^*=u_e$. Therefore,  we
conclude that $u(t,\cdot)\to u_e$ in~${\rm C}^1([0,1])$ and,
hence, in ${\rm C}^2([0,1])$ thanks to \eqref{BBB2}. The
proof is finished.
\end{proof}

\begin{rem}
An interesting open problem is whether the non-trivial steady-state $(h_e,c_e)$ of Proposition \ref{P2} is actually stable for the full
evolutionary problem \eqref{EBBx}. Numerical
simulations seem to indicate this. While the solution $u=c_*-v$ in
the transformed variables \eqref{trafo} appears to very quickly become
monotone increasing, the biofilm depth $h$ converges in an oscillatory
fashion on a somewhat larger time scale. A calculation shows that the
linearization for $h$ vanishes in the steady-state so that even local
asymptotic stability is not a consequence of the principle of
linearized stability. This is clearly visible in the numerical
experiment shown in Figure \ref{fig:F1}, where $r=2\tanh(\cdot)$, the initial
configuration is given by $h_0=3.5$ and $u_0=\cos^2(2\pi y)$, and the
parameters used are $\varepsilon=1$, $c_*=1$, $\alpha=1$, and $b=.5$.
\begin{figure}
\begin{center}
  \includegraphics[scale=.65]{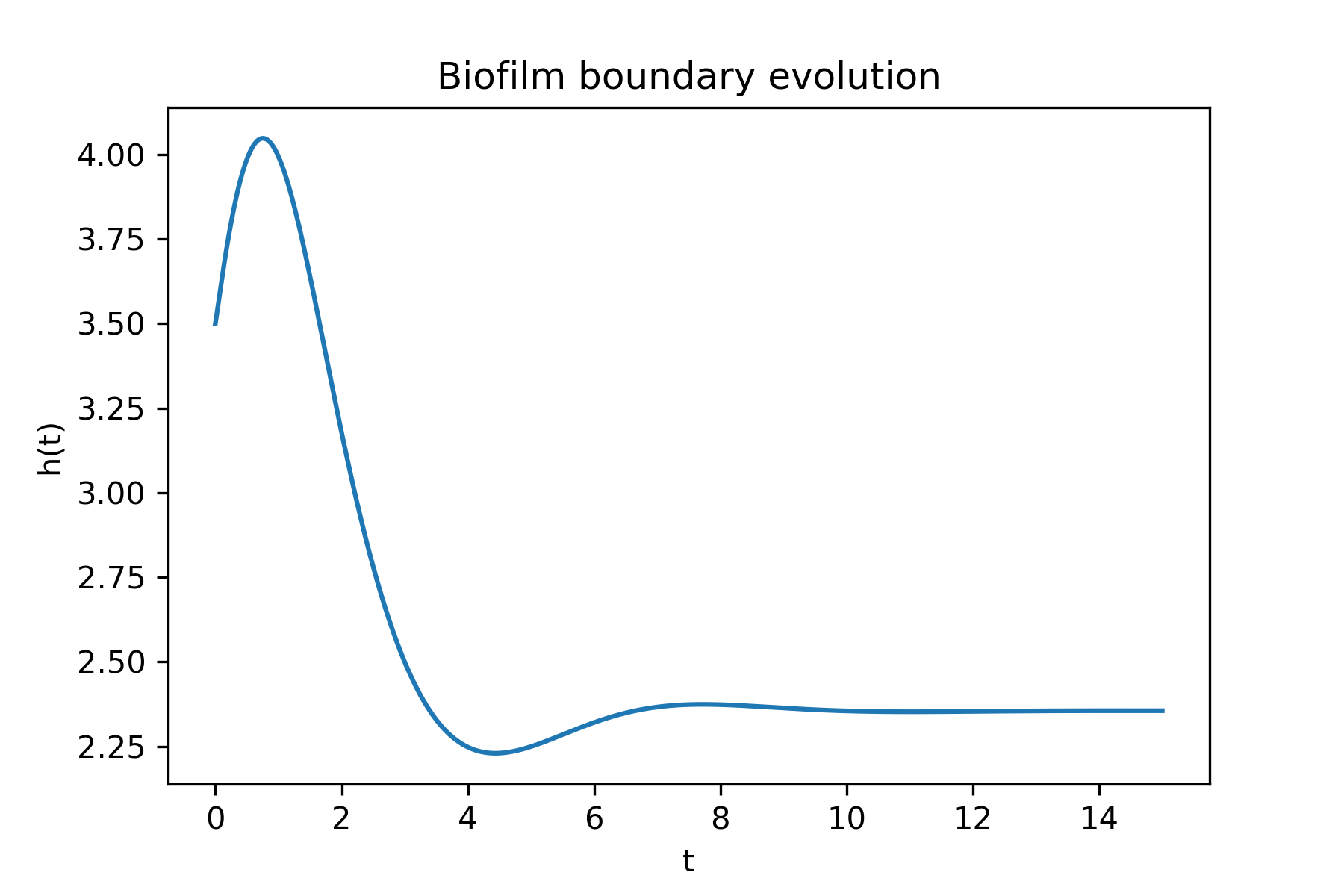}
  \includegraphics[scale=.4]{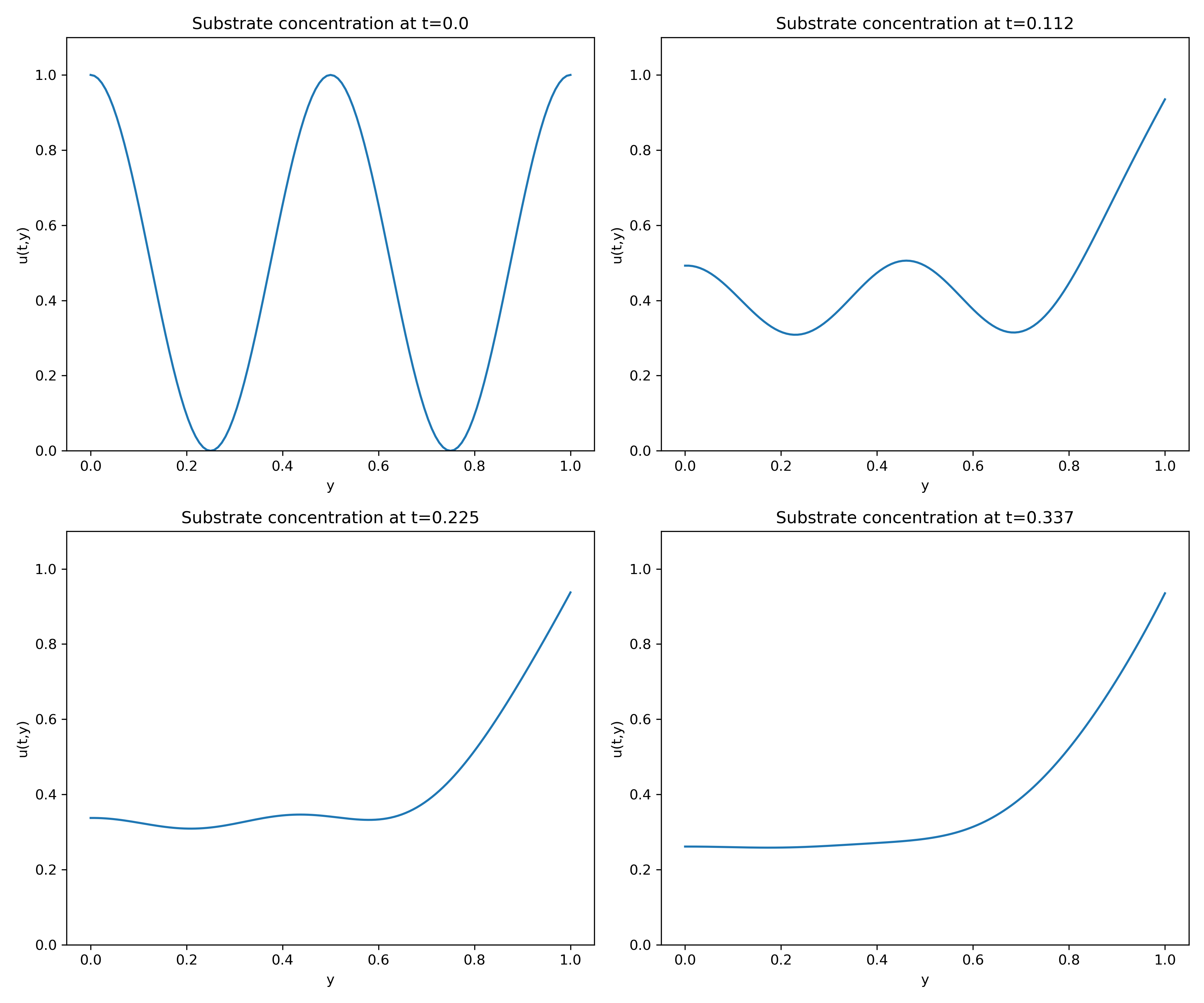}
  \caption{The "long" time evolution of the moving boundary (film depth) and that of substrate concentration at small times.}
  \label{fig:F1}
\end{center}
\end{figure}
The simulation highlights the fact that diffusion quickly removes
oscillations in the initial concentration (Figure \ref{fig:F1}) but, within the class of
monotone substrate concentrations, the convergence to equilibrium does
not appear to be exponential nor monotone (Figure \ref{fig:F2}).
\begin{figure}
  \begin{center}
    \includegraphics[scale=.65]{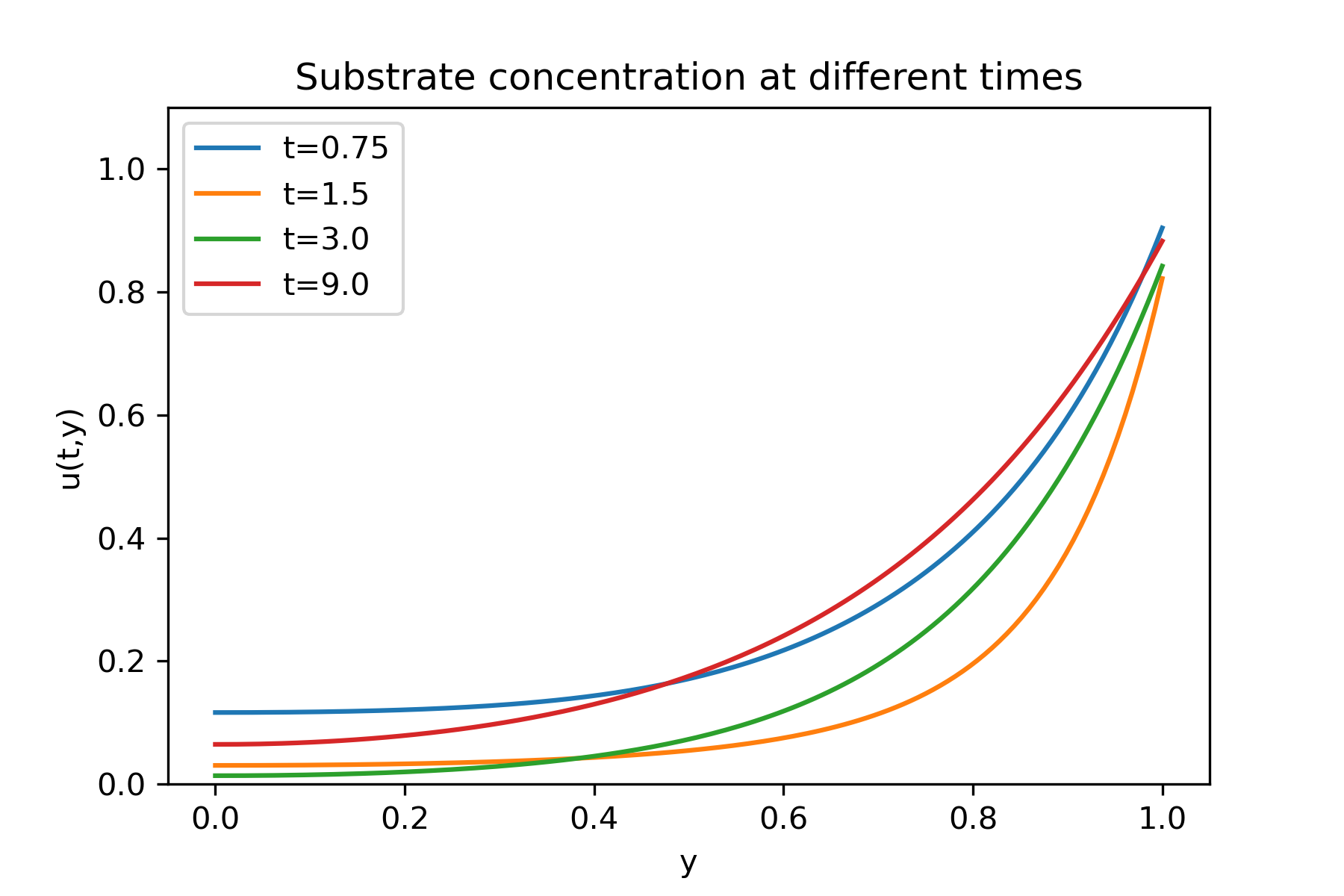}
    \caption{The behavior of substrate concentration at larger times.}
    \label{fig:F2}
  \end{center}
\end{figure}
\end{rem}
\bibliographystyle{siam}
\bibliography{BiofilmLiterature}

\begin{thebibliography}{1}

\bibitem{Amann_Teubner}
{\sc H.~Amann}, {\em {Nonhomogeneous linear and quasilinear elliptic and
  parabolic boundary value problems}}, in {Function spaces, differential
  operators and nonlinear analysis ({F}riedrichroda, 1992)}, vol.~133 of
  {Teubner-Texte Math.}, Teubner, Stuttgart, 1993, p.~9–126.

\bibitem{KD02}
{\sc J.~Dockery and I.~Klapper}, {\em Finger formation in biofilm layers}, SIAM
  Journal on Applied Mathematics, 62 (2002), pp.~853--869.

\bibitem{GT}
{\sc D.~Gilbarg and N.~S. Trudinger}, {\em Elliptic partial differential
  equations of second order}, Classics in Mathematics, Springer-Verlag, Berlin,
  2001.
\newblock Reprint of the 1998 edition.

\bibitem{KD10}
{\sc I.~Klapper and J.~Dockery}, {\em Mathematical description of microbial
  biofilms}, SIAM Review, 52 (2010), pp.~221--265.

\end{thebibliography}

\end{document}